\newcommand{\Z}{\mathbb{Z}}
\def\norm#1{\|#1\|}
\newcommand{\Bigabs}[1]{\Bigl\vert #1 \Bigr\vert}
\newcommand{\C}{\mathbb{C}} 
\newcommand{\R}{\mathbb{R}}
\newcommand{\angles}[1]{\langle #1 \rangle}
\DeclareMathOperator{\supp}{supp}
\newtheorem{theorem}{Theorem}
\newtheorem{corollary}[theorem]{Corollary}
\newtheorem{proposition}[theorem]{Proposition}
\newtheorem{lemma}[theorem]{Lemma}
\theoremstyle{remark} \newtheorem{remark}[theorem]{Remark}
\theoremstyle{definition} \newtheorem{definition}[theorem]{Definition}
\numberwithin{equation}{section} \numberwithin{theorem}{section}
\title[Scattering for semi-relativistic equations]{Small data
  scattering for semi-relativistic equations with Hartree type
  nonlinearity}
\author[S.~Herr]{Sebastian Herr} \address{Universit\"{a}t Bielefeld,
  Fakult\"{a}t f\"{u}r Mathematik, Postfach 10 01 31 , D-33501
  Bielefeld, Germany} \email{herr@math.uni-bielefeld.de}
\author[A.~Tesfahun]{Achenef Tesfahun} \address{Universit\"{a}t
  Bielefeld, Fakult\"{a}t f\"{u}r Mathematik, Postfach 10 01 31 ,
  D-33501 Bielefeld, Germany} \email{achenef@math.uni-bielefeld.de.}
\thanks{The authors acknowledge support from the German Research
  Foundation, Collaborative Research Center 701.  }
\subjclass[2010]{35Q55}
\begin{document}

\begin{abstract}
  We prove that the initial value problem for the equation $$ -
  i\partial_t u + \sqrt{m^2-\Delta} \, u= \left(\frac{e^{-\mu_0
        |x|}}{|x|} \ast |u|^2\right)u \ \text{in } \ \R^{1+3}, \quad
  m\ge 0, \ \mu_0 >0$$ is globally well-posed and the solution
  scatters to free waves asymptotically as $t \rightarrow \pm \infty$
  if we start with initial data which is small in $H^s(\R^3)$ for
  $s>\frac12$, and if $m>0$. Moreover, if the initial data is radially
  symmetric we can improve the above result to $m\ge 0$ and $s>0$,
  which is almost optimal, in the sense that $L^2(\R^{3})$ is the
  critical space for the equation.  The main ingredients in the proof
  are certain  endpoint Strichartz estimates, $L^2(\R^{1+3})$ bilinear
  estimates for free waves and an application of the $U^p$ and $V^p$
  function spaces.
 
\end{abstract}

\maketitle

\section{Introduction}
We consider the initial value problem (IVP) for the semi-relativistic
equation with a cubic Hartree-type nonlinearity:
\begin{equation}\label{BS}
  \begin{split}
    - i\partial_t u + \sqrt{m^2-\Delta} \, u&= (V \ast |u|^2)u \quad \text{in } \R^{1+3},\\
    u(0,\cdot)&=f \in H^s(\R^3),
  \end{split}
\end{equation}
where $\sqrt{m^2-\Delta}$ is defined via its symbol $\sqrt{m^2 +
  |\xi|^2}$ in Fourier space, the constant $m \geq 0$ is a physical
mass parameter, the symbol $\ast$ denotes convolution in $\R^3$ and
$V$ is a potential, typically,
\begin{align}
  \label{P}
  \quad V(x)&= \frac{e^{-\mu_0|x|}}{|x|} , \quad \mu_0\ge 0,
\end{align}
which is called a \emph{Coulomb potential} if $\mu_0=0$ and a
\emph{Yukawa potential} if $\mu_0>0$.

Equation \eqref{BS} is used to describe the dynamics and gravitational
collapse of relativistic boson stars and it is often referred to as
\textit{the boson star equation}; see \cite{ES07, FJL07, LHT07, MS12}
and the references therein.

It is well-known that equation \eqref{BS} exhibits the following
conserved quantities of energy and $L^2$-mass, which are given by
\begin{align*}
  E(u(t)) &= \frac{1}{2} \int_{\R^3} \bar{u} \sqrt{m^2-\Delta} u \, dx
  + \frac{1}{4} \int_{\R^3} \left( V \ast {|u|}^2 \right) {|u|}^2 \,
  dx \, ,
  \\
  M(u(t)) &= \int_{\R^3} |u|^2 \, dx.
\end{align*}
From these conservation laws, we see that the Sobolev space
$H^{\frac12}(\R^3)$ serve as the energy space for problem \eqref{BS}.
Furthermore, in the case of $m=0$, \eqref{BS} is invariant under the
scaling
 $$u(t,x) \mapsto u_\lambda(t,x) = \lambda^{\frac32} u(\lambda t, \lambda x)$$ for 
 fixed $\lambda > 0$. This scaling symmetry leaves the $L^2$-mass
 $M(u(t)) = M(u_\lambda(t))$ invariant, and so equation \eqref{BS} is
 \emph{$L^2$-critical}.

 There has been a considerable mathematical interest concerning the
 low regularity well-posedness (both local and global-in-time) and
 scattering theory of the initial value problem \eqref{BS} in the past
 few years.  A first well-posedness result was obtained by Lenzmann
 \cite{L07} for $s \geq \frac12$ using energy methods.  Moreover, he
 showed global well-posedness in $H^{\frac12}$ for initial data
 sufficiently small in $L^2$.  There has been further well-posedness
 and scattering results for equation \eqref{BS} with a more general
 potential, namely, $V(x)= |x|^{-\gamma}, \ \gamma\in (0, n)$; see
 eg. \cite{CO06,CO07, CO08, COSS09}. Recently, Pusateri \cite{FP14} proved a modified scattering result in the case of the Coulomb potential in dimension $n=3$, if $m>0$.

 Recently, Lenzmann and the first author \cite{HL14} proved local
 well-posedness for initial data with $s>\frac14$ (and $s>0$ if the
 data is radially symmetric). Moreover, these results are optimal up
 to end points, i.e., up to $s=\frac14$ (and $s=0$), in the framework
 of perturbation methods. Strichartz estimates and space-time bilinear
 estimates were the main ingredients.  This is in contrast with
 previous results where only energy methods and linear Strichartz
 estimates were used.

 The aim of this paper is to prove global existence and scattering of
 solutions to the IVP \eqref{BS}. Our main result is the following.
 
\begin{theorem}[Main Theorem]\label{MainThm} Let $\mu_0>0$ in \eqref{P}, i.e., $V$ is a Yukawa potential.
  Assume one of the following holds:
  \begin{enumerate}[(a)]
  \item \label{MainThma} $m\ge 0$, $s>0$ and $f$ is radially
    symmetric,
  \item \label{MainThmb} $m> 0$ and $s>\frac12$.
  \end{enumerate}
  Then, there exists $\delta > 0$ such that for all $f \in H^s(\R^3)$
  satisfying
$$
\norm{f}_{ H^s} < \delta,
$$
the IVP \eqref{BS} has a global solution (spatially radial solution if
$f$ is radial) $$ u \in C(\R, H^s(\R^3)).$$ Moreover, the solution
depends continuously on $f$ and scatters asymptotically as $t
\rightarrow \pm \infty$.  Furthermore, it is unique in some smaller
subspace of $C(\R, H^s(\R^3))$.
\end{theorem}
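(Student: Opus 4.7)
The plan is to treat \eqref{BS} by a contraction mapping argument in an adapted function space, using the $U^p$/$V^p$ framework tied to the half-wave propagator $e^{-it\sqrt{m^2-\Delta}}$. Concretely, I would work with a resolution space $X^s$ defined so that $\|u\|_{X^s}^2 = \sum_N N^{2s}\|P_N u\|_{U^2_\pm}^2$ where $U^2_\pm$ is the atomic space adapted to the half-wave flow; this choice is natural because elements of $U^2_\pm$ inherit the Strichartz estimates of the linear propagator, embed continuously into $C(\R;H^s)$, and automatically have limits as $t\to\pm\infty$, which will deliver scattering for free once a uniform-in-time bound is established.

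The argument then reduces to proving the trilinear estimate
\[
\bignorm{(V*(u\bar v))w}_{DU^2_\pm H^s} \lesssim \|u\|_{X^s}\|v\|_{X^s}\|w\|_{X^s},
\]
where $DU^2_\pm H^s$ is the Duhamel dual of $V^2_\pm H^s$. I would decompose each factor via Littlewood--Paley into $P_{N_j}$-pieces and split the sum according to the dyadic frequencies $N_1,N_2,N_3$ of $u,v,w$ and the output frequency $N_0$. The essential input is the $L^2_{t,x}$ bilinear estimate for products $P_{N_1}u\cdot\overline{P_{N_2}v}$ of free half-wave solutions, which exploits the transversality of the characteristic cones of $e^{\pm it\sqrt{m^2-\Delta}}$; this is transferred by the $U^2$-atomic structure to general $X^s$-functions. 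The Yukawa potential is crucial here: its Fourier multiplier $\widehat V(\xi) = 4\pi(|\xi|^2+\mu_0^2)^{-1}$ is a bounded function of $\xi$, so the low-frequency singularity of the Coulomb kernel is absent and $V*(u\bar v)$ is essentially a harmless projection of $u\bar v$ in $L^2_{t,x}$.

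In the non-radial case \textbf{(b)}, $m>0$ provides a genuine gap between the dispersion relations $\tau=\pm\sqrt{m^2+|\xi|^2}$ and $\tau=\pm|\xi|$, so the standard Klein--Gordon-type Strichartz estimates combined with the bilinear estimate give enough derivatives to close the estimate for any $s>\tfrac12$; the half-derivative gain over $L^2$-critical scaling corresponds to the Klein--Gordon scaling and is compatible with energy-regularity data. In the radial case \textbf{(a)}, the available radial endpoint Strichartz estimates (for the massless or massive half-wave equation) provide substantially stronger integrability, and combining these with the bilinear estimate one should be able to close the trilinear estimate all the way down to the $L^2$-critical scaling, losing only an $\varepsilon$ of regularity to allow summation of the dyadic pieces, whence $s>0$ suffices.

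Granted the trilinear estimate, the standard contraction in a small ball of $X^s$ around the free evolution $e^{-it\sqrt{m^2-\Delta}}f$ produces a unique fixed point $u\in X^s$; continuous dependence follows from the same multilinear estimate applied to differences. Scattering is then immediate: the Duhamel term lies in $U^2_\pm H^s \hookrightarrow C(\R;H^s)$ with limits at $\pm\infty$, so $u(t) - e^{-it\sqrt{m^2-\Delta}}f_\pm \to 0$ in $H^s$ for some $f_\pm\in H^s$. I expect the main obstacle to be the radial endpoint case: pushing the trilinear estimate to $s>0$ in the massless regime requires carefully organizing the high-high$\to$low interactions, where the Yukawa multiplier contributes the decisive $(|\xi|^2+\mu_0^2)^{-1}$ factor at the output frequency, and where the $U^2$-$V^2$ duality must be used in place of simple Hölder to keep endpoint Strichartz bounds available.
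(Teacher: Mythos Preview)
Your proposal is correct and follows essentially the same route as the paper: an $X^s$ space built from $U^2$ adapted to the propagator $e^{-it\langle D\rangle_m}$, a trilinear estimate proved by $V^2$-duality and Littlewood--Paley decomposition using Strichartz and bilinear $L^2$ bounds (with the Yukawa multiplier supplying the crucial $\langle\xi\rangle^{-2}$ gain), then contraction and scattering from the existence of $V^2$ limits. The paper's implementation differs only in technical detail: the radial bilinear $L^2_{t,x}$ estimate is obtained by an explicit polar-coordinate computation (not general transversality), the non-radial case relies on cube-localized $L^2_tL^\infty_x$ Strichartz estimates rather than bilinear ones, and the upgrade from $U^2$ to $V^2$ on the dual factor is done via the Koch--Tataru--Vi\c{s}an $U^2/U^p$ decomposition lemma rather than direct duality.
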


If $m>0$, due to the modified scattering result in \cite{FP14} (see also \cite[Theorem 4.1]{CO06}), it is known that the scattering result in Theorem \ref{MainThm} does not carry over to the case $\mu_0=0$, i.e.\ if $V$ is the Coulomb potential.

\begin{remark}\label{OtherHartreeType}
  Let us consider the IVP for the nonlinear Dirac equation with
  Hartree type nonlinearity:
  \begin{equation}\label{Dirac}
    \begin{split}
      ( - i\partial_t + \boldsymbol \alpha \cdot D + m \beta) \psi&=
      \lambda (V \ast |\psi|^2)\psi \quad \text{in } \R^{1+3},
      \\
      \psi(0,\cdot)&=\psi_0 \in H^s(\R^3),
    \end{split}
  \end{equation}
  where $D=-i \nabla$, $\psi: \R^{1+3} \rightarrow \C^4$ is the Dirac
  spinor regarded as a column vector, $\lambda \in \C$, and $\beta$
  and $\boldsymbol \alpha=(\alpha^1, \alpha^2, \alpha^3)$ are the
  Dirac matrices. We refer the reader to \cite{BH13} for a
  representation of the Dirac matrices and a recent result for a
  related problem with a cubic nonlinearity with null-structure.
 
  Equation \eqref{Dirac}, with a Coulomb potential $V$, was derived by
  Chadam and Glassey \cite{CG76} by uncoupling the Maxwell-Dirac
  equations under the assumption of vanishing magnetic field. Then in
  two space dimensions they showed existence of a unique global
  solution for smooth initial data with compact support. They also
  conjectured \cite[see pp. 507]{CG76} equation \eqref{Dirac} with a
  Yukawa potential $V$ can be derived by uncoupling the
  Dirac-Klein-Gordon equations, see also \cite{CG74,BH14} for certain
  global and scattering results in this context.  Later, Dias and
  Figueira \cite{DF89-2, DF89-1} proved existence of weak solution for
  \eqref{Dirac} with a Yukawa potential in the massless case
  ($m=0$). We now comment on how to conclude a similar result as in
  Theorem \ref{MainThm}\ref{MainThmb} for the IVP \eqref{Dirac} with a
  Yukawa potential.
 
  Following \cite{BH13}, we define the projections
$$ P_{\pm}(\xi) = \frac{1}{2} \left( I \pm \frac{1}{\angles{\xi}_m}[\xi \cdot \boldsymbol \alpha + m\beta]
\right),$$ where $$\angles{\xi}_m=\sqrt{|\xi|^2+m^2}.$$ Then $\psi =
\psi_+ + \psi_-$, where $\psi_\pm = P_\pm(D) \psi$. Now, if we apply
$P_\pm(D)$ to \eqref{Dirac}, the IVP transforms to
\begin{equation}\label{Dirac2}
  \left\{
    \begin{aligned}
      & \bigl( -i\partial_t + \angles{D}_m \bigr) \psi_+ =
      P_+(D)\left[(V \ast |\psi|^2)\psi\right], \quad
      \psi_+(0,\cdot)=\psi_0^+ \in H^s(\R^3),
      \\
      & \bigl( -i\partial_t - \angles{D}_m \bigr) \psi_- =
      P_-(D)\left[(V \ast |\psi|^2)\psi\right], \quad
      \psi_-(0,\cdot)=\psi_0^-\in H^s(\R^3),
    \end{aligned}
  \right.
\end{equation}
where $\psi_0^\pm=P_\pm(D)\psi_0$. Notice that these equations are of
the form \eqref{BS}, and hence an easy modification\footnote{One has
  to be careful in the radial case since the Dirac operator does not
  preserve spherical symmetry in the classical sense (see e.g. \cite{
    MNN2005, EO12}).  However, we do not consider this case here. }
of the proof of Theorem \ref{MainThm}\ref{MainThmb} will give the
following result.
\end{remark}

\begin{corollary}\label{RemarkCorollary} Let $\mu_0>0$ in \eqref{P} (i.e., $V$ is a Yukawa potential).
  Assume $m> 0$ and $s>\frac12$.  Then, there exists $\delta > 0$ such
  that for all $(\psi_0^+, \psi_0^-) \in H^s(\R^3)\times H^s(\R^3)$
  satisfying
$$
\norm{\psi_0^\pm}_{ H^s} < \delta,
$$
the IVP \eqref{Dirac2} has a global solution $$ (\psi_+, \psi_-) \in
C(\R, H^s(\R^3)\times H^s(\R^3)).$$ Moreover, the solution depends
continuously on $(\psi_0^+, \psi_0^-) $ and scatters asymptotically as
$t \rightarrow \pm \infty$.  Furthermore, it is unique in some smaller
subspace.
\end{corollary}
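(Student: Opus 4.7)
The plan is to mimic the proof of Theorem~\ref{MainThm}\ref{MainThmb} for the coupled system~\eqref{Dirac2}, as suggested in Remark~\ref{OtherHartreeType}. I would set up the resolution space as the product $X^s_+ \times X^s_-$ of the same adapted $U^p$/$V^p$-based space used for the scalar equation~\eqref{BS}, now tailored to the two propagators $e^{\mp it\angles{D}_m}$; since both evolutions share the characteristic surface $\tau = \pm\angles{\xi}_m$ up to reflection, the homogeneous linear estimates, transference from endpoint Strichartz bounds, and $L^2$ bilinear bounds all transfer to $X^s_-$ with at most trivial modifications. The fixed-point map, via Duhamel's formula, reads
\[
(\psi_+,\psi_-) \longmapsto \Bigl(e^{-it\angles{D}_m}\psi_0^+ - i \int_0^t e^{-i(t-s)\angles{D}_m} P_+(D)\bigl[(V \ast |\psi|^2)\psi\bigr]\,ds,\ \text{analogue for }\psi_-\Bigr),
\]
where $\psi = \psi_+ + \psi_-$.

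Next I would reduce matters to a trilinear estimate. Expanding $|\psi|^2\psi$ produces a sum of eight terms of the form $(V \ast (\psi_{\varepsilon_1}\overline{\psi_{\varepsilon_2}}))\psi_{\varepsilon_3}$ indexed by signs $\varepsilon_j \in \{+,-\}$. Since $P_\pm(\xi)$ is a smooth, bounded, matrix-valued zeroth-order symbol, the projections $P_\pm(D)$ act boundedly on $H^s$ and on the adapted spaces componentwise, so they may be absorbed. It then suffices to establish, for each sign choice, the trilinear bound
\[
\Bigl\|\int_0^t e^{-i(t-s)\angles{D}_m} \bigl[(V \ast (\psi_{\varepsilon_1}\overline{\psi_{\varepsilon_2}}))\psi_{\varepsilon_3}\bigr]\,ds\Bigr\|_{X^s_+} \lesssim \prod_{j=1}^3 \|\psi_{\varepsilon_j}\|_{X^s_{\varepsilon_j}},
\]
together with the analogue for $X^s_-$. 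Each of these is the direct counterpart of the single trilinear estimate driving Theorem~\ref{MainThm}\ref{MainThmb}; the ingredients are identical, namely Littlewood--Paley dyadic decomposition, the endpoint Strichartz and $L^2$ bilinear estimates for free half-waves, and the two-derivative smoothing afforded by the Yukawa multiplier $(|\xi|^2+\mu_0^2)^{-1}$, which controls the potentially problematic low-frequency output of $V \ast (\psi_{\varepsilon_1}\overline{\psi_{\varepsilon_2}})$.

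The one genuinely new point is that the $L^2$ bilinear estimates, originally stated for products of two $e^{-it\angles{D}_m}$-waves, must be upgraded to mixed-sign products $e^{-it\angles{D}_m}\phi_1 \cdot e^{+it\angles{D}_m}\phi_2$. I expect this to be the main technical obstacle, but only a mild one: the transversality analysis underlying the bilinear bounds depends on the geometry of the characteristic surfaces $\tau = \pm\angles{\xi}_m$, which is invariant under reflection $(\tau,\xi) \mapsto (-\tau,-\xi)$, and the opposite-sign interaction is in fact the better-separated one. Once these trilinear estimates are in hand, the small-data contraction mapping principle furnishes a unique fixed point $(\psi_+,\psi_-) \in X^s_+ \times X^s_- \hookrightarrow C(\R; H^s \times H^s)$, and continuous dependence together with scattering as $t \to \pm\infty$ follow from the standard $U^p/V^p$ machinery exactly as in the scalar case.
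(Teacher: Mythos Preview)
Your approach is correct and aligns with the paper's own treatment: the paper gives no separate proof of Corollary~\ref{RemarkCorollary}, stating only (in Remark~\ref{OtherHartreeType}) that the system~\eqref{Dirac2} has the same form as~\eqref{BS} and that ``an easy modification'' of the proof of Theorem~\ref{MainThm}\ref{MainThmb} yields the result. Your sketch fleshes out exactly that modification.

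One point worth correcting, though it does not affect the outcome: you identify the extension of the $L^2$ bilinear estimates to mixed-sign products as the main technical obstacle, but the proof of Theorem~\ref{MainThm}\ref{MainThmb} (i.e.\ of Lemma~\ref{KeyLemma1}\ref{keylemma:b} in Section~\ref{ProofKeyLemma}) does not use the bilinear $L^2$ estimate at all; Corollary~\ref{CorrKG2} enters only in the radial argument for part~\ref{keylemma:a}. The non-radial trilinear bound relies solely on the linear Strichartz estimate~\eqref{KGStr-Transfer} and the localized Strichartz estimate~\eqref{KGStr1-Transfer} together with the cube decomposition of Remark~\ref{rmk-cubedecomp}. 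Since these are estimates on a single free wave, they hold for $e^{+it\angles{D}_m}$ and $e^{-it\angles{D}_m}$ identically, so the passage to the coupled $\pm$ system is genuinely trivial. The only structural work is what you already noted: $P_\pm(D)$ is a bounded zeroth-order multiplier commuting with the Littlewood--Paley projections, and the cubic nonlinearity expands into finitely many sign-combination terms, each handled exactly as in the scalar case.
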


Recently, in \cite{BH13,BH14} small data scattering results for low
regularity initial data have been proven for the nonlinear Dirac
equation and a massive Dirac-Klein-Gordon system.  These problems
exhibit null-structure and a non-resonant behavior. Note that there
is also null-structure in \eqref{Dirac2}, so we expect that the
regularity threshold in Corollary \ref{RemarkCorollary} can be
lowered, but we do not pursue this here. We will use some ideas
introduced in \cite{BH13,BH14}, in particular localized Strichartz
estimates in the non-radial case, but otherwise the analysis differs
significantly.

The rest of the paper is organized as follows. In the next Section, we
give some notation, define the $U^p$ and $V^p$-spaces and collect
their properties.  In Section \ref{LinBi-Est}, we prove some linear
and bilinear estimates for solutions of free Klein-Gordon equation. In
Section \ref{ProofMainThm}, we state a key proposition and then give
the proof of Theorem \ref{MainThm}.  In Section
\ref{ProofKeyProp}--\ref{ProofKeyLemma}, we give the proof of the key
proposition.

\section{Notation, $U^p$ and $V^p$-spaces and their
  properties}\label{Sec-UV}

\subsection{Notation}
We denote the spatial Fourier transform by $\widehat{\cdot}$ or
$\mathcal F_x$ and the space-time Fourier transform by
$\widetilde{\cdot}$.  Frequencies will be denoted by Greek letters
$\mu$ and $\lambda$, which we will assume to be dyadic, that is of the
form $2^k$ for $k \in \Z$.

Consider an even function $\beta_1 \in C_0^\infty((-2, 2))$ such that
$\beta_1(s)=1 $ if $|s|\le 1$ , and define for $\lambda>1$,
$$
\beta_{\lambda}(s)=\beta_1\left(\frac{s}{\lambda}\right)-\beta_1\left(\frac{2s}{\lambda}\right).
$$
Thus, $\supp \beta_{1}= \{ s\in \R: |s|<2 \}$ whereas $\supp
\beta_{\lambda}= \{ s\in \R: \frac\lambda 2<|s|<2\lambda \}$ for
$\lambda>1$.  We define
\begin{equation*}
  P_{\lambda} u:=u_\lambda=\mathcal F_x^{-1}(\beta_\lambda(|\cdot|)\mathcal F_x u)  
  \quad \text{and} \quad \widetilde P_\lambda=P_{\frac{\lambda}2}+P_\lambda+P_{2\lambda}.
\end{equation*}

For fixed $m\ge 0$, we denote $ S_m(t)=e^{-it\angles{D}_m} $ to be the
linear propagator of the Boson star equation \eqref{BS} defined by
$$
\mathcal F_x(S_m(t)f)(\xi)=e^{-it\angles{\xi}_m}\widehat{f}(\xi).
$$

\subsection{$U^p$ and $V^p$ spaces }
These function spaces were originally introduced in the unpublished
work of Tataru on the wave map problem and then in Koch-Tataru
\cite{KT07} in the context of NLS. The spaces have since been used to
obtain critical results in different problems related to dispersive
equations (see eg. \cite{HHK09, HHK09-1, TS09, HTT11}) and they serve
as a useful replacement of $X^{s, b}$-spaces in the limiting cases.
For the convenience of the reader we list the definitions and some
properties of these spaces.

Let $\mathcal{Z}$ be the collection of finite partitions $-\infty <
t_0 < \cdots < t_K \leq \infty$ of $\R$. If $t_K=\infty$, we use the
convention $u(t_K) :=0$ for all functions $u:\R\to L^2$.  We use
$\chi_I$ to denote the sharp characteristic function of a set $I
\subset \R$.

\begin{definition} \label{DefUp} Let $1\leq p < \infty$.  A $U^p$-atom
  is defined by a step function $a:\R\to L^2$ of the form
$$
a(t) = \sum_{k = 1}^K \chi_{[t_{k-1}, t_k) (t)} \phi_{k - 1},
 $$
 where $$\{t_k\}_{k = 0}^K \in \mathcal{Z}, \quad \{\phi_k\}_{k =
   0}^{K-1} \subset L^2 \ \text{with} \ \sum_{k = 0}^{K-1}
 \|\phi_k\|_{L^2}^p = 1.$$ The atomic space $U^p(\R; L^2)$ is defined
 to be the collection of functions $u:\R\to L^2$ of the form
 \begin{equation} \label{Up} u = \sum_{j = 1}^\infty \lambda_j a_j, \
   \text{ where $a_j$'s are $U^p$-atoms and} \ \{\lambda_j\}_{j \in
     \mathbb{N}}\in \ell^1,
 \end{equation}
 with the norm
$$\|u\|_{U^p} : = \inf_{\text{representation \eqref{Up} }}  \sum_{j=1}^{\infty} |\lambda_j|.
$$
\end{definition}

\begin{definition} \label{DefVp} Let $1\leq p<\infty$.

  \begin{enumerate}
  \item \label{V:def} define $V^p(\R, L^2)$ as the space of all
    functions $v:\R\to L^2$ for which the norm
    \begin{equation}\label{Vp-norm}
      \|v\|_{V^p}:=\sup_{\{t_k\}_{k=0}^K \in \mathcal{Z}}
      \left(\sum_{k=1}^{K}
        \|v(t_{k})-v(t_{k-1})\|_{L^2}^p\right)^{\frac{1}{p}}
    \end{equation}
    is finite.
  
  \item \label{V-:def} Likewise, let $V^p_-(\R, L^2)$ denote the
    normed space of all functions $v:\R\to L^2$ such that
    $\lim_{t\rightarrow -\infty} v(t)=0$ and $\|v\|_{V^p} < \infty$,
    endowed with the norm \eqref{Vp-norm}.

  \item \label{Vcr:def} We let $V^p_{rc}(\R, L^2)$ ($V^p_{-,rc} (\R,
    L^2)$) denote the closed subspace of all right-continuous $V^p
    (\R, L^2)$ functions ($V^p_-(\R, L^2)$ functions).

  \end{enumerate}

\end{definition}

We collect some useful properties of these spaces. For more details
about the spaces and proofs we refer to \cite{HHK09, HTT11}.

\begin{proposition}\label{Prop-Up}
  Let $1\leq p< q < \infty$. Then we have the following:
  \begin{enumerate}
  \item \label{U-Banach} $U^p(\R, L^2)$ is a Banach space.
  \item\label{U-Emb} The embeddings $U^p(\R, L^2)\subset U^q(\R,
    L^2)\subset L^\infty(\R;L^2)$ are continuous.
  \item\label{U-RightCont} Every $u\in U^p(\R, L^2)$ is
    right-continuous. Moreover, $\lim_{t\to - \infty}u(t)=0$.
  
  \end{enumerate}
\end{proposition}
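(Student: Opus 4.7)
The plan is to derive all three assertions from one elementary observation about a single $U^p$-atom $a=\sum_{k=1}^K \chi_{[t_{k-1},t_k)}\phi_{k-1}$: the atomic normalization $\sum_k \|\phi_{k-1}\|_{L^2}^p = 1$ forces each $\|\phi_{k-1}\|_{L^2} \le 1$, and so $\|a(t)\|_{L^2} \le 1$ for every $t$. In addition, $a$ is right-continuous as an $L^2$-valued step function (the indicators are on half-open intervals $[t_{k-1},t_k)$) and vanishes identically for $t<t_0$.

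For \ref{U-Emb} I would transport the pointwise bound $\|a(t)\|_{L^2}\le 1$ through the series $u=\sum_j \lambda_j a_j$ to obtain $\|u\|_{L^\infty_t L^2_x}\le \sum_j |\lambda_j|$, and then pass to the infimum over representations to get the embedding $U^p\hookrightarrow L^\infty(\R;L^2)$. For $p<q$, the inequality $\|\phi_{k-1}\|_{L^2}\le 1$ also gives $\sum_k\|\phi_{k-1}\|_{L^2}^q\le \sum_k\|\phi_{k-1}\|_{L^2}^p=1$, so that every $U^p$-atom is already a $U^q$-atom, yielding $\|u\|_{U^q}\le \|u\|_{U^p}$ at the level of atomic representations.

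For \ref{U-Banach}, positive homogeneity and subadditivity of $\|\cdot\|_{U^p}$ are immediate from Definition \ref{DefUp}, while definiteness ($\|u\|_{U^p}=0\Rightarrow u=0$) follows from the embedding just established. Completeness I would obtain by the standard atomic trick: given a Cauchy sequence $\{u^{(n)}\}$, extract a fast subsequence with $\|u^{(n_{k+1})}-u^{(n_k)}\|_{U^p}<2^{-k}$, pick atomic representations $u^{(n_{k+1})}-u^{(n_k)}=\sum_j\lambda_{k,j}a_{k,j}$ with $\sum_j|\lambda_{k,j}|<2^{-k+1}$, and concatenate them into one atomic series for $u^{(n_1)}+\sum_{k,j}\lambda_{k,j}a_{k,j}$; absolute summability in $\ell^1$ then certifies that this series defines an element $u\in U^p$, and part \ref{U-Emb} allows one to identify $u$ with the $L^\infty_t L^2_x$-pointwise limit of $\{u^{(n)}\}$.

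For \ref{U-RightCont}, both right-continuity and the vanishing at $-\infty$ are visibly true for each single atom, and both properties transfer to $u=\sum_j\lambda_j a_j$ by the uniform $L^\infty_tL^2_x$-convergence of the partial sums, which is again a consequence of \ref{U-Emb}: for the limit at $-\infty$ one picks $N$ with $\sum_{j>N}|\lambda_j|<\varepsilon/2$ and then chooses $T$ so small that the first $N$ atoms all vanish for $t<T$. The only genuinely delicate step in the whole argument is verifying that the double atomic series constructed in the completeness proof converges in $U^p$ (and not merely pointwise), but this reduces to the absolute summability of the $\lambda_{k,j}$'s in $\ell^1$.
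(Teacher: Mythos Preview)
The paper does not prove this proposition at all; it simply states it and refers the reader to \cite{HHK09, HTT11} for proofs. Your argument is the standard one found in those references and is correct in outline: the key observation that each $U^p$-atom is bounded by $1$ in $L^\infty_t L^2_x$ yields both the embeddings in \eqref{U-Emb} and the definiteness of the norm, the atomic concatenation trick gives completeness, and right-continuity together with vanishing at $-\infty$ transfer from atoms to general elements via uniform convergence of the atomic series.

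One small point worth making explicit (you allude to it implicitly): the very definition of $U^p$ as consisting of functions $u=\sum_j\lambda_j a_j$ presupposes that this series converges in some sense to an honest function $\R\to L^2$; your pointwise bound $\|a_j(t)\|_{L^2}\le 1$ combined with $\sum_j|\lambda_j|<\infty$ is exactly what guarantees absolute convergence in $L^\infty_t L^2_x$, so the definition is not circular. This is the only place where one has to be a bit careful, and you have handled it correctly.
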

 
\begin{proposition}\label{Prop-Vp}Let $1\leq p<q <\infty$. Then we
  have the following:
  \begin{enumerate}
  \item\label{V-Spaces} The spaces $V^p(\R, L^2)$, $V^p_{rc}(\R,
    L^2)$, $V^p_-(\R, L^2)$ and $V^p_{-,rc}(\R, L^2)$ are Banach
    spaces.
  \item \label{V-emb1} The embedding $U^p (\R, L^2) \subset V_{-,rc}^p
    (\R, L^2)$ is continuous.
  \item \label{V-emb2} The embeddings $V^p (\R, L^2)\subset V^q (\R,
    L^2)$ and $V^p_-(\R, L^2) \subset V^q_- (\R, L^2)$ are continuous.
  \item \label{V-embCont} The embedding $V^p_{-,rc}(\R, L^2) \subset
    U^q (\R, L^2)$ is continuous.
  \end{enumerate}
\end{proposition}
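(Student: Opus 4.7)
The plan is to handle the four statements in the order given, relying on the definitions in Definition \ref{DefUp} and Definition \ref{DefVp} and the properties of $U^p$ in Proposition \ref{Prop-Up}. First I would notice that the convention $v(t_K) := 0$ at $t_K = \infty$ means that taking any two-point partition $-\infty < t_0 < t_1 < \infty$ with $t_2 = \infty$ gives the control $\|v(t_1)\|_{L^2} \le \|v\|_{V^p}$, so $\|v\|_{L^\infty_t L^2_x} \le \|v\|_{V^p}$. For property \ref{V-Spaces}, this pointwise bound shows that any Cauchy sequence $\{v_n\}$ in $V^p$ is uniformly Cauchy in $L^2$ and hence converges uniformly to some limit $v$; a Fatou-type argument applied to the finite sums defining the $V^p$-norm then yields $v \in V^p$ and $v_n \to v$ in $V^p$. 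The subspaces $V^p_{rc}$, $V^p_-$ and $V^p_{-,rc}$ are closed in $V^p$ since both right-continuity and the vanishing condition $\lim_{t \to -\infty} v(t) = 0$ survive uniform $L^2$-convergence. Property \ref{V-emb2} is immediate from the elementary inequality $\|(a_k)\|_{\ell^q} \le \|(a_k)\|_{\ell^p}$ for $p \le q$ applied to each partition in the sup defining the $V^p$-norm.

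For property \ref{V-emb1}, by Proposition \ref{Prop-Up} \ref{U-RightCont} every $u \in U^p$ is right-continuous with $\lim_{t \to -\infty} u(t) = 0$, so only the norm bound $\|u\|_{V^p} \lesssim \|u\|_{U^p}$ is needed. By the atomic representation in \eqref{Up} it suffices to verify the bound with constant $2$ on a single $U^p$-atom $a = \sum_{k=1}^K \chi_{[t_{k-1}, t_k)} \phi_{k-1}$. Given an arbitrary partition $\{s_j\}$ of $\R$, refine it by adjoining the breakpoints $\{t_k\}$ and observe that $\|a(s_j) - a(s_{j-1})\|_{L^2}$ is either zero (both endpoints in the same piece) or of the form $\|\phi_{k} - \phi_{k-1}\|_{L^2}$, so the triangle inequality combined with the Minkowski-type inequality in $\ell^p$ yields the desired bound in terms of $\bigl(\sum \|\phi_{k-1}\|_{L^2}^p\bigr)^{1/p} = 1$.

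The main work is in property \ref{V-embCont}, which I expect to be the hardest step. Given $v \in V^p_{-,rc}$ with $\|v\|_{V^p} = 1$, I would construct stopping-time partitions: set $t_0^n = -\infty$ and recursively $t_{k+1}^n = \inf\{ t > t_k^n : \|v(t) - v(t_k^n)\|_{L^2} \ge 2^{-n}\}$ (terminating when no such $t$ exists). A variation count gives that this procedure produces at most $N_n \lesssim 2^{np}$ intervals, and right-continuity of $v$ ensures each $t_{k+1}^n$ is attained or the process terminates properly. Define the step-function approximations $v_n(t) = \sum_k \chi_{[t_k^n, t_{k+1}^n)}(t) \, v(t_k^n)$, which satisfy $\|v - v_n\|_{L^\infty_t L^2_x} \le 2^{-n}$ and are right-continuous with $\lim_{t\to-\infty} v_n(t) = 0$. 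Writing the telescoping sum $v = \sum_{n \ge 1}(v_n - v_{n-1})$ (with $v_0 \equiv 0$), each difference $v_n - v_{n-1}$ is a step function with at most $N_n + N_{n-1}$ breakpoints and jump sizes bounded by $2^{-n+1}$; hence it is a $U^q$-atom multiplied by a factor comparable to $(N_n)^{1/q} \cdot 2^{-n} \lesssim 2^{n(p/q - 1)}$.

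Since $p < q$, the geometric series $\sum_{n \ge 1} 2^{n(p/q - 1)}$ converges, producing a valid $U^q$ atomic representation of $v$ with $\|v\|_{U^q} \lesssim \|v\|_{V^p}$. The subtle points to watch are: ensuring the stopping-time construction is well-defined and gives a finite partition for each interval of $\R$ (right-continuity of $v$ is essential here), verifying rigorously that $v_n \to v$ in the $U^q$-norm so that the series representation truly converges in $U^q$, and correctly counting the pieces of $v_n - v_{n-1}$. Combining all of this delivers the embedding, completing the proof.
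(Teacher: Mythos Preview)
The paper does not actually prove Proposition \ref{Prop-Vp}; it is stated without proof, with the reader referred to \cite{HHK09, HTT11} for details. Your proposal is a correct outline of the standard proof found in those references: the Banach space property via uniform $L^2$-convergence plus a Fatou argument, the trivial $\ell^p\hookrightarrow\ell^q$ bound for part \ref{V-emb2}, the atom-by-atom verification of $U^p\subset V^p_{-,rc}$, and the stopping-time/telescoping construction for $V^p_{-,rc}\subset U^q$ are exactly the arguments in Hadac--Herr--Koch. One small imprecision: in your treatment of part \ref{V-emb1}, after refining a test partition $\{s_j\}$ by the atom's breakpoints you should argue on the \emph{refined} partition (where every jump is $\phi_k-\phi_{k-1}$ or zero), noting that refinement can only increase the $V^p$-sum; alternatively, observe directly that $a(s_j)=\phi_{k(j)}$ with $k(j)$ nondecreasing, so each $\phi_k$ contributes to at most two jump terms, giving $\|a\|_{V^p}\le 2$. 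Otherwise the sketch is sound, and in particular the dyadic stopping-time decomposition with the count $N_n\lesssim 2^{np}$ and the geometric series $\sum 2^{n(p/q-1)}$ is precisely the mechanism behind part \ref{V-embCont}.
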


\begin{lemma}\cite{KTV14}\label{Lemma:Interp}
  Let $p>2$ and $v\in V^2(\R, L^2)$. There exists $\kappa=\kappa(p)>0$
  such that for all $M\ge 1$, there exist $w \in U^2(\R, L^2)$ and $z
  \in U^p(\R, L^2)$ with
  $$ v=   w+   z$$ and 
\begin{equation*}
  \frac{\kappa}M \norm { w}_{U^2}+ e^M
  \norm { z}_{U^p}\lesssim \norm { v}_{V^2}.
\end{equation*}
\end{lemma}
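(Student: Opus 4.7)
The plan is to decompose $v$ dyadically via a stopping-time approximation at scale $2^{-n}$ and then split the resulting telescoping series at a threshold $N$ chosen proportional to $M$. By linearity I would normalize $\|v\|_{V^2}=1$ and work with $v\in V^2_{-,rc}$ (so that the $U^p$-atomic framework applies and $v$ vanishes at $-\infty$). Define inductively stopping times $t_0^{(n)}=-\infty$ and
$$
t_{k+1}^{(n)}=\inf\bigl\{t>t_k^{(n)} : \|v(t)-v(t_k^{(n)})\|_{L^2}>2^{-n}\bigr\},
$$
with the convention $t_k^{(n)}=+\infty$ once the set becomes empty. The $V^2$-norm bound forces $\sum_k \|v(t_{k+1}^{(n)})-v(t_k^{(n)})\|_{L^2}^{2}\le 1$, and since each finite summand is $\ge 2^{-2n}$, the number $K_n$ of finite stopping times satisfies $K_n\le 2^{2n}$.

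Next I would set $v_n$ equal to the step function with value $v(t_k^{(n)})$ on $[t_k^{(n)}, t_{k+1}^{(n)})$; by construction $\|v-v_n\|_{L^\infty(\R, L^2)}\le 2^{-n}$ and $v_0\equiv 0$ (since $\|v(t)\|_{L^2}\le\|v\|_{V^2}=1$). The key estimate is on the telescoping difference $v_{n+1}-v_n$, viewed as a step function on the common refinement of the level-$n$ and level-$(n+1)$ partitions: this refinement has $\lesssim 2^{2n}$ intervals, and on each one $v_{n+1}-v_n$ takes a value of $L^2$-norm $\le 2^{-n}+2^{-n-1}\lesssim 2^{-n}$. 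Realizing this step function as a single scaled $U^p$-atom then yields
$$
\|v_{n+1}-v_n\|_{U^p}\lesssim \bigl(2^{2n}\cdot 2^{-np}\bigr)^{1/p}=2^{-n(1-2/p)},
$$
which is summable in $n$ exactly because $p>2$; specializing to $p=2$ gives the weaker $\|v_{n+1}-v_n\|_{U^2}\lesssim 1$.

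Given $M\ge 1$, I would choose $N=\lceil c_p M\rceil$ with $c_p=2/((1-2/p)\log 2)$ and split
$$
w:=v_N=\sum_{n=0}^{N-1}(v_{n+1}-v_n),\qquad z:=v-v_N=\sum_{n\ge N}(v_{n+1}-v_n),
$$
where $U^p$-convergence of the series defining $z$ follows from the geometric bound above. The triangle inequality in $U^2$ and $U^p$ respectively yields $\|w\|_{U^2}\lesssim N\lesssim M$ and $\|z\|_{U^p}\lesssim 2^{-N(1-2/p)}\lesssim e^{-2M}$, so that choosing $\kappa(p)>0$ sufficiently small produces $\frac{\kappa}{M}\|w\|_{U^2}+e^M\|z\|_{U^p}\lesssim 1=\|v\|_{V^2}$, as claimed. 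The main technical obstacle is the passage from $V^2$-information (only square-summability of jumps) to an $\ell^p$-summable atomic bound on $v_{n+1}-v_n$: this is precisely where $p>2$ is used, producing the decay factor $2^{-n(1-2/p)}$ that is then balanced against $e^M$ by choosing $N$ slightly larger than $M$, the slack in this balance being what quantitatively determines $\kappa(p)$.
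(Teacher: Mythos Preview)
Your argument is correct and is precisely the standard stopping-time decomposition from the cited reference \cite{KTV14}; the paper itself gives no proof and simply quotes the lemma. One minor remark: to make the bound $K_n\le 2^{2n}$ rigorous you implicitly use right-continuity of $v$ (so that the jump at each stopping time is at least $2^{-n}$), which is why you correctly restrict to $V^2_{-,rc}$ at the outset.
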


We now introduce $U^ p,V^p$-type spaces that are adapted to the linear
propagator $ S_m(t)=e^{it\angles{D}_m} $ of equation \eqref{BS}:

\begin{definition}\label{UV-Propag}
  We define $U^p_{m}(\R, L^2) $ (and $V^p_{m} (\R, L^2)$,
  respectively) to be the spaces of all functions $u: \R \mapsto
  L^2(\R^3)$ such that $t\to S_m(-t)u $ is in $U^p (\R, L^2) $
  (resp. $V^p(\R, L^2) $), with the respective norms:
  \begin{align*}
    \|u \|_{U^p_{m} } &= \| S_m(-t)u\|_{U^p},
    \\
    \|u \|_{V^p_{m} } &= \| S_m(-t) u\|_{V^p}.
  \end{align*}
  We use $V^p_{\text{rc}, m} (\R, L^2) $ to denote the subspace of
  right-continuous functions in $V^p_{m} (\R, L^2)$.

\end{definition}

 \begin{remark}
   Proposition \ref{Prop-Up}, Proposition \ref{Prop-Vp} and Lemma
   \ref{Lemma:Interp} naturally extends to the spaces $U^p_{m}(\R,
   L^2) $ and $V^p_{m} (\R, L^2)$.
 \end{remark}

\begin{lemma}\label{LemmaTransfer}
{\rm (Transfer principle)}
Let $$T:L^2\times \cdots \times L^2\to L^1_{loc}(\R^n;\C)$$ be a
multilinear operator and suppose that we have
\[ \big\| T(S_m(t) \phi_1, \dots, S_m(t) \phi_k)\big\|_{L^p_t
  L^r_x(\R\times \R^n)} \lesssim \prod_{j = 1}^k \|\phi_j\|_{L^2_x (
  \R^n)}\] for some $1\leq p, r \leq \infty$.  Then
\[ \big\| T(u_1, \dots, u_k)\big\|_{ L^p_t L^r_x(\R\times \R^n)}
\lesssim \prod_{j = 1}^k \|u_j\|_{U^p_{m} }.\]
\end{lemma}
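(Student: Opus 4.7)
The plan is to reduce everything to the assumed linear estimate by peeling off one argument of $T$ at a time, exploiting the atomic definition of the $U^p_m$-norm and the $\ell^p$-normalization built into every $U^p$-atom. The key compatibility is that raising $\|\cdot\|_{L^p_t L^r_x}$ to the $p$-th power and summing over a partition of $\R$ in the time variable matches exactly the normalization $\sum_i \|\phi_{i-1}\|_{L^2}^p = 1$ of an atom.

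Specifically, I would prove by induction on $N$ the following slightly stronger statement: for any choice of $N$ of the $k$ slots of $T$ to be occupied by general $U^p_m$-functions and the remaining $k-N$ slots to be occupied by free evolutions $S_m(\cdot)\varphi_i$,
\[
\|T(\cdots)\|_{L^p_t L^r_x(\R\times\R^n)} \lesssim \prod_{i\text{ free}} \|\varphi_i\|_{L^2}\,\prod_{i\in U^p_m\text{-slot}} \|u_i\|_{U^p_m}.
\]
The base case $N=0$ is precisely the hypothesis of the lemma, noting that $\|S_m(\cdot)\varphi\|_{U^p_m}=\|\varphi\|_{L^2}$.

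For the inductive step, fix one $U^p_m$-slot, say $u_1$, and for $\epsilon>0$ choose an atomic decomposition $S_m(-\cdot)u_1=\sum_\ell \lambda_\ell a_\ell$ with $\sum_\ell|\lambda_\ell|\le(1+\epsilon)\|u_1\|_{U^p_m}$. Multilinearity and the triangle inequality in $L^p_tL^r_x$ reduce matters to bounding $\|T(S_m(\cdot)a,\ldots)\|_{L^p_tL^r_x}$ for a single $U^p$-atom $a(t)=\sum_i \chi_{[t_{i-1},t_i)}(t)\phi_{i-1}$ with $\sum_i\|\phi_{i-1}\|_{L^2}^p=1$. On the interval $[t_{i-1},t_i)$, $S_m(t)a(t)$ coincides with the genuine free evolution $S_m(t)\phi_{i-1}$, so by disjointness of these intervals,
\[
\|T(S_m(\cdot)a,\ldots)\|_{L^p_tL^r_x}^p \le \sum_i \|T(S_m(\cdot)\phi_{i-1},\ldots)\|_{L^p_tL^r_x(\R\times\R^n)}^p.
\]
Each summand involves only $N-1$ generic $U^p_m$-slots, so by the inductive hypothesis it is bounded by $\|\phi_{i-1}\|_{L^2}^p$ times the product of the remaining norms. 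Summing in $i$ uses $\sum_i\|\phi_{i-1}\|_{L^2}^p=1$, and letting $\epsilon\to 0$ completes the step.

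I do not foresee any real obstacle; this is a standard manipulation in the $U^p$/$V^p$ calculus. The only point requiring a moment of care is that one must \emph{not} try to atomize all $k$ arguments simultaneously and then pass to a common refinement of their partitions: on a refined partition the same $\phi_{i-1}$ may appear in many subintervals, which destroys the $\ell^p$-normalization $\sum_i\|\phi_{i-1}\|_{L^2}^p=1$. Unwinding one argument at a time avoids this pitfall entirely, since at each step the $L^p_t$ sum is taken against the partition of the atom that is currently being opened up.
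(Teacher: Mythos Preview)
The paper does not supply its own proof of this lemma; it is stated as a standard fact about the $U^p/V^p$ calculus, with the reader referred to \cite{HHK09,HTT11} for background. Your argument is precisely the standard proof one finds in those references: reduce to atoms by multilinearity and the $\ell^1$-structure of $U^p$, then use that on each subinterval of an atom's partition the function is a genuine free wave, so the $L^p_t$-norm splits additively and the $\ell^p$-normalization of the atom absorbs the sum. The inductive ``one slot at a time'' unwinding is exactly the right way to organize this, and your remark about why one should not pass to a common refinement of all partitions is on point.

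One small inaccuracy: the parenthetical ``$\|S_m(\cdot)\varphi\|_{U^p_m}=\|\varphi\|_{L^2}$'' is not literally true on all of $\R$, since by Proposition~\ref{Prop-Up}\eqref{U-RightCont} every $U^p$-function vanishes at $-\infty$, while $t\mapsto S_m(-t)S_m(t)\varphi\equiv\varphi$ does not. This does not affect your proof at all, because the base case $N=0$ is simply the hypothesis of the lemma as stated; you do not need to interpret free waves as $U^p_m$-functions there.
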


\section{Linear and bilinear estimates}\label{LinBi-Est}
In this section, we prove linear and bilinear estimates for free
solutions of the Klein-Gordon equation both for radial and non-radial
data, which are key to prove our main Theorem.

\subsection{Estimates in the radial case}

\begin{lemma}\label{Lemma-KGStrRadial}
  Let $m\ge 0$.  Consider $u(t)=S_m(t) f$, where $f$ is radial.  Then
  \begin{align}
    \label{LinStr-radial}
    \norm{ u_{\lambda} }_{ L^2_{t} L^\infty_{ x} (\R^{1+3}) }&\lesssim
    \lambda \norm{ f_{ \lambda} }_{ L^2_{ x}(\R^{3} )}.
  \end{align}
  Moreover, for all radial function $u_\lambda \in U^2_{m} $, we have
  \begin{align}
    \label{LinStr-RadialTransfer}
    \norm{ u_{\lambda} }_{ L^2_{t} L^\infty_{ x} (\R^{1+3}) }&\lesssim
    \lambda \norm{ u_{\lambda} }_{ U^2_{m} }.
  \end{align}

\end{lemma}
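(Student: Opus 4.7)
The plan is to reduce to the free-solution estimate \eqref{LinStr-radial} via the transfer principle of Lemma~\ref{LemmaTransfer}, which gives \eqref{LinStr-RadialTransfer} automatically once the former is established. For \eqref{LinStr-radial}, I would start from the three-dimensional radial Fourier representation
\[ (S_m(t)f_\lambda)(x) = \frac{4\pi}{r}\int_0^\infty \sin(r\rho)\,e^{-it\angles{\rho}_m}\,\widehat{f_\lambda}(\rho)\,\rho\,d\rho,\qquad r=\abs{x}, \]
which is valid for radial $f_\lambda$ and reduces the three-dimensional oscillatory integral to a one-dimensional one.

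The natural next step is to apply Plancherel in $t$ after the change of variables $\tau=\angles{\rho}_m$ (with $d\tau=(\rho/\tau)\,d\rho$), so that the $\rho$-integral becomes an inverse Fourier transform in $t$. This yields, for each fixed $r>0$, the pointwise bound
\[ \|u_\lambda(\cdot,r)\|_{L^2_t}^2 \lesssim \frac{1}{r^2}\int_0^\infty |\widehat{f_\lambda}(\rho)|^2\,\sin^2(r\rho)\,\rho\,\tau(\rho)\,d\rho, \]
together with the analogous bound for $\|\partial_r u_\lambda(\cdot,r)\|_{L^2_t}^2$ with an extra $\rho^2$ in the integrand.

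To upgrade these pointwise-in-$r$ estimates to $L^2_t L^\infty_x$, I would dyadically decompose the radial variable into shells $r\sim N$ and apply the standard one-dimensional Sobolev embedding
\[ \sup_{r\sim N}|g(r)|^2 \lesssim N^{-1}\int_{N}^{2N}|g(r)|^2\,dr + N\int_{N}^{2N}|g'(r)|^2\,dr \]
to $g(r)=u_\lambda(t,r)$, then integrate the result in $t$ and insert the Plancherel bounds. The factor $\sin^2(r\rho)\le\min(1,r^2\rho^2)$ combined with the $r^{-2}$ prefactor produces the correct $\lambda$-power on each annulus, and the inner region $r\lesssim 1/\lambda$ is treated separately via a Taylor expansion of the sinc kernel, reducing to the direct Plancherel estimate $\|u_\lambda(\cdot,0)\|_{L^2_t}\lesssim \lambda\|f_\lambda\|_{L^2}$.

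The main technical difficulty I expect is obtaining the bound uniformly in both $\lambda$ and $m\ge 0$, since the Jacobian $\tau(\rho)/\rho$ behaves like $1$ when $\lambda\gtrsim m$ but like $m/\lambda\gg 1$ when $\lambda\lesssim m$. The balance between the $r^{-2}$ prefactor, the refined bound $\sin^2(r\rho)\le r^2\rho^2$ on small annuli, and the annulus scale $N$ must be managed carefully to absorb this non-uniformity, and this is precisely the point where the three-dimensional radial structure (producing the $r^{-1}$ factor in the representation formula) is essential.
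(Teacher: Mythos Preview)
Your reduction of \eqref{LinStr-RadialTransfer} to \eqref{LinStr-radial} via the transfer principle is exactly what the paper does. For \eqref{LinStr-radial} itself the paper gives no argument and simply cites \cite{St05}, so your self-contained attempt goes beyond the paper. Unfortunately the outlined scheme has a real gap.

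The problem is in the outer region $r\gtrsim 1/\lambda$, not the inner one. After the Sobolev step on a shell $r\sim N$ and insertion of the Plancherel bounds (take $m=0$ for clarity), the derivative contribution is
\[
N\int_N^{2N}\|\partial_r u_\lambda(\cdot,r)\|_{L^2_t}^2\,dr
\;\sim\; N^2\cdot \frac{\lambda^2}{N^2}\,\|f_\lambda\|_{L^2}^2
\;=\;\lambda^2\|f_\lambda\|_{L^2}^2,
\]
which is \emph{independent of $N$}. Summing over the infinitely many dyadic $N\gtrsim 1/\lambda$ therefore diverges, and replacing the sum by a supremum does not help because $\int\sup_N(\cdot)\,dt\ge\sup_N\int(\cdot)\,dt$, not the reverse. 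In effect your Plancherel-plus-Sobolev scheme only controls $\sup_r\|u_\lambda(\cdot,r)\|_{L^2_t}$, i.e.\ the $L^\infty_x L^2_t$ norm, which is strictly weaker than the $L^2_t L^\infty_x$ norm you need.

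What is missing is a device that links the location of the spatial supremum to $t$. One clean fix (for $m=0$) is to write $r\,u_\lambda(t,r)=G(r-t)-G(-r-t)$ with $G(s)=c\int_0^\infty e^{is\rho}\widehat{f_\lambda}(\rho)\,\rho\,d\rho$, so that
\[
|u_\lambda(t,r)|=\frac{1}{r}\Bigl|\int_{-r-t}^{\,r-t}G'(s)\,ds\Bigr|\le 2\,MG'(-t),
\]
and then use the $L^2$-boundedness of the Hardy--Littlewood maximal operator together with $\|G'\|_{L^2}\sim\lambda\|f_\lambda\|_{L^2}$. The case $m>0$ needs an adaptation after the change of variables $\tau=\angles{\rho}_m$, and this is essentially what the reference cited by the paper supplies.
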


\begin{proof}
  For the proof of \eqref{LinStr-radial}, see for example \cite[
  Theorem 1.3]{St05}. Then \eqref{LinStr-RadialTransfer} follows from
  \eqref{LinStr-radial} by applying the transfer principle in Lemma
  \ref{LemmaTransfer}.
\end{proof}

The following Lemma extends the result of Foschi-Klainerman for $m=0$ \cite[Lemma 4.4]{FK00} to the massive case.
\begin{lemma}\label{LemmaKG}
  Let $m\ge 0$ and consider the integral
 $$
 I(\phi, \psi)(\tau, \xi)= \int \phi(|\eta|)\psi(|\xi-\eta|)\delta(
 \tau-\angles{\eta}_m+ \angles{\xi-\eta}_m) \, d\eta.
 $$
 Then
 \begin{align}
   \label{I-intg}
   I(\phi, \psi)(\tau, \xi)&\simeq \frac1{|\xi|}
   \int_{\frac{\tau+|\xi|}2}^{\infty} \phi(\rho)\psi(\varphi
   (\tau,\rho)) \rho (\angles{\rho}_m-\tau) \, d\rho,
 \end{align}
 where
$$
\varphi (\tau, \rho)= \sqrt{ (\angles{\rho}_m-\tau)^2-m^2}.
$$
\end{lemma}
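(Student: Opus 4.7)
The plan is to reduce $I$ to a one-dimensional integral in $\rho := |\eta|$ by exploiting the rotational invariance of the integrand. I would orient coordinates so that $\xi = |\xi|e_3$, write $\eta = \rho\omega$ with $\omega \in S^2$, and let $u = \cos\theta$ where $\theta = \vangle(\eta, \xi)$. Since the integrand depends only on $\rho$ and $u$, the azimuthal integration produces a factor $2\pi$. I would then substitute $r = |\xi-\eta| = \sqrt{\rho^2 + |\xi|^2 - 2\rho|\xi|u}$, so that $r\,dr = -\rho|\xi|\,du$ and $r$ ranges over $[\bigabs{|\xi|-\rho},\,\rho+|\xi|]$ as $u$ ranges over $[-1,1]$, yielding
\begin{equation*}
I = \frac{2\pi}{|\xi|}\int_0^\infty \phi(\rho)\,\rho\,d\rho\int_{\bigabs{|\xi|-\rho}}^{\rho+|\xi|}\psi(r)\,\delta(\tau - \angles{\rho}_m + \angles{r}_m)\,r\,dr.
\end{equation*}

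Next, I would integrate out the $\delta$ function. Since $\frac{d}{dr}\angles{r}_m = r/\angles{r}_m$, the equation $\angles{r}_m = \angles{\rho}_m - \tau$ has a unique non-negative solution $r_0 = \varphi(\tau,\rho)$ precisely when $\angles{\rho}_m - \tau \ge m$, and evaluating there contributes the Jacobian factor $\angles{r_0}_m/r_0 = (\angles{\rho}_m - \tau)/r_0$. Hence the inner integral collapses to $\psi(\varphi(\tau,\rho))(\angles{\rho}_m - \tau)$ times the indicator that $r_0 \in [\bigabs{|\xi|-\rho},\,\rho+|\xi|]$ and $\angles{\rho}_m - \tau \ge m$.

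The main obstacle is then to identify the resulting admissible set of $\rho$ with $[(\tau+|\xi|)/2, \infty)$, up to equivalences absorbed into the notation $\simeq$. The upper bound $r_0 \le \rho+|\xi|$ squares to $(\angles{\rho}_m - \tau)^2 - m^2 \le (\rho+|\xi|)^2$, i.e.\ $\angles{\rho}_m - \tau \le \angles{\rho+|\xi|}_m$, which is automatic whenever $r_0$ is defined. The binding condition is the lower bound $r_0 \ge \bigabs{|\xi|-\rho}$; squaring and using $\angles{\rho}_m^2 = \rho^2 + m^2$ reduces it, after elementary algebra, to $2\rho|\xi| - (|\xi|^2 - \tau^2) \ge 2\tau\angles{\rho}_m$. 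In the massless case $m=0$ this collapses to $\rho \ge (\tau+|\xi|)/2$, recovering the Foschi--Klainerman result. For $m>0$ a direct computation (solving the corresponding quadratic in $\rho$) shows that the solution set is of the form $\{\rho \ge \rho_*(\tau,|\xi|,m)\}$ with $\rho_*$ comparable to $(\tau+|\xi|)/2$, and the constraint $\angles{\rho}_m - \tau \ge m$ is then automatic. Combining these pieces produces the claimed formula, with the constant $2\pi$ absorbed into $\simeq$.
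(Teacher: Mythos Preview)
Your argument is correct and reaches the same conclusion as the paper, but the mechanism for integrating out the angular variable differs. The paper first uses the identity $\delta(g)=h\,\delta(hg)$ with $h=-(\tau-\angles{\eta}_m)+\angles{\xi-\eta}_m$ to rewrite
\[
\delta(\tau-\angles{\eta}_m+\angles{\xi-\eta}_m)=2(\angles{\eta}_m-\tau)\,\delta\bigl(|\xi|^2-\tau^2+2\tau\angles{\eta}_m-2\xi\cdot\eta\bigr),
\]
so that the argument of the delta becomes \emph{linear} in $a=\omega\cdot\xi/|\xi|$; integrating in $a$ then produces the factor $(2|\xi|\rho)^{-1}$ and the constraint $a\le 1$ directly yields $\rho\ge(\tau+|\xi|)/2$. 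You instead pass from the angular variable to $r=|\xi-\eta|$ and integrate the delta in $r$, picking up the Jacobian $\angles{r_0}_m/r_0$. The two substitutions are equivalent (for fixed $\rho$, $r$ is monotone in $u$), and your route is arguably more transparent since it avoids the multiplicative delta identity. One small point: your claim that the upper constraint $r_0\le \rho+|\xi|$ is ``automatic whenever $r_0$ is defined'' implicitly uses $\tau\ge \angles{\rho}_m-\angles{\rho+|\xi|}_m$, which is clear for $\tau\ge 0$ but deserves a word for $\tau<0$; the paper is equally silent about the corresponding constraint $a\ge -1$, and in both proofs the precise endpoint for $m>0$ is absorbed into the $\simeq$ (and is in any case irrelevant in the application, where the $\rho$-integral is immediately extended to all of $\R$).
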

\begin{proof}
  The proof given here is a modification of the argument for $m=0$ from \cite[Lemma 4.4]{FK00}.
  For a smooth function $g$, define the hypersurface
$$S=\{x\in\R^3: g(x)=0\}. $$ If
$\nabla g \neq 0$ for $x\in S \cap \text{supp} f$, then
\begin{equation}\label{Delta1}
  \int f(x) \delta(g(x))\, dx=\int_S \frac{f(x)}{|\nabla g(x)|} \, dS_x.
\end{equation}
For a nonnegative smooth function $h$ which does not vanish on $S$,
\eqref{Delta1} also implies
\begin{equation}\label{Delta2}
  \delta(g(x))=h(x)\delta\left(h(x)g(x)\right).
\end{equation}

Now using \eqref{Delta2}, we can write
\begin{align*}
  \delta( \tau-\angles{\eta}_m+\angles{\xi-\eta}_m) &=[-(
  \tau-\angles{\eta}_m)+ \angles{\xi-\eta}_m] \delta\left(
    -(\tau-\angles{\eta}_m)^2+ \angles{\xi-\eta}_m^2\right)
  \\
  &=2( \angles{\eta}_m-\tau) \delta\left(|\xi|^2- \tau^2+2\tau
    \angles{\eta}_m - 2\xi \cdot \eta \right),
\end{align*}
where in the first line we multiplied the argument of the delta
function on the left by $ -(\tau-\angles{\eta}_m)+
\angles{\xi-\eta}_m$.
 
Introduce polar coordinate $\eta=\rho \omega$, where $\omega \in
\mathbb S^2$. Then $$ |\eta|=\rho, \quad d\eta= \rho^2 dS_w d\rho.$$
If we also set $a= \omega \cdot \xi/ |\xi|$, then $$dS_w = dS_{w'} da,
\quad d\eta= \rho^2 dS_{w'} d\rho da, $$ where $\omega' \in \mathbb
S^1$.  In the change of variables, we obtain $\angles{\xi-\eta}_m=
\angles{\eta}_m-\tau=\angles{\rho}_m-\tau$, which in turn implies
$|\xi-\eta|^2= (\angles{\rho}_m-\tau)^2-m^2.$ With these
transformations our integral becomes
$$
I(\phi, \psi)(\tau, \xi)\simeq \int_0^\infty \int_{-1}^{1}
\phi(\rho)\psi\left( \varphi(\tau, \rho) \right)
\rho^2(\angles{\rho}_m-\tau) \delta\left( |\xi|^2-\tau^2+2\tau
  \angles{\rho}_m- 2|\xi|\rho a \right)\, da d\rho.
 $$
 The delta function sets the value of $a$ to
 \begin{equation}\label{a1}
   a=\frac{ |\xi|^2-\tau^2+2\tau \angles{\rho}_m  }{ 2|\xi| \rho},
 \end{equation}
 which implies $a\ge \frac{\tau}{|\xi|}$, and thus we are restricted
 to $\frac{\tau}{|\xi|} \le a\le 1$.  This forces us to integrate over
$$
\left \{\rho >0 : \rho \ge \frac{ \tau+|\xi| }{ 2}\right\}.
$$
Using these facts and \eqref{Delta1} gives the desired estimate.
\end{proof}

\begin{lemma}\label{LemmaKG1}
  Let $m\ge 0$.  Consider $u^+(t)=S_m(t) f$ and $v^-(t)=S_m(-t) g$,
  where $f$ and $g$ are radial.  Then for any $\mu , \lambda_1,
  \lambda_2\ge 1 $, we have
  \begin{align*}
    \label{KGE}
    \norm{P_{ \mu} ( u^+_{\lambda_1} v^-_{\lambda_2}) }_{L^2_{t,
        x}(\R^{1+3})}&\lesssim \mu \norm{ f_{ \lambda_1}}_{L^2_{
        x}(\R^{3})} \norm{g_{ \lambda_2}}_{L^2_{ x}(\R^{3})} .
  \end{align*}

\end{lemma}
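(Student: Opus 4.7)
The approach is to pass to the space-time Fourier side via Plancherel, reduce to a one-dimensional integral using the radial representation in Lemma~\ref{LemmaKG}, and then combine Cauchy--Schwarz with a careful change of variables. Since $f,g$ are radial, write $\hat f_{\lambda_1}(\eta)=F(|\eta|)$ and $\hat g_{\lambda_2}(\eta)=G(|\eta|)$, with $F$ supported in $\rho\sim\lambda_1$ and $G$ in $\rho\sim\lambda_2$. A direct computation of the Fourier transform of $u^+v^-$ gives
\[
\widetilde{u^+_{\lambda_1} v^-_{\lambda_2}}(\tau,\xi) \simeq \int G(|\eta|)F(|\xi-\eta|)\delta\bigl(\tau -\angles{\eta}_m+\angles{\xi-\eta}_m\bigr)\,d\eta,
\]
which is exactly $I(G,F)(\tau,\xi)$ in the notation of Lemma~\ref{LemmaKG} (after the harmless reflection $\tau\mapsto -\tau$). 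Therefore, by Plancherel,
\[
\|P_\mu(u^+_{\lambda_1} v^-_{\lambda_2})\|_{L^2_{t,x}}^2 \simeq \int\!\!\int_{r\sim \mu} |I(G,F)(\tau,r)|^2\,r^2 \, dr\, d\tau.
\]

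Applying Lemma~\ref{LemmaKG} produces $I(G,F)(\tau,r)\simeq r^{-1} J(\tau,r)$ with
\[
J(\tau,r)=\int_{\rho\ge (\tau+r)/2} G(\rho)\,F(\sigma)\,\rho\,\angles{\sigma}_m\,d\rho, \qquad \sigma=\varphi(\tau,\rho).
\]
The $r^{-2}$ from $|I|^2$ cancels the $r^2$ from the polar measure, so the claim reduces to
\[
\int\!\!\int_{r\sim\mu} |J(\tau,r)|^2\,dr\,d\tau \lesssim \mu^2\,\|f_{\lambda_1}\|_{L^2}^2\,\|g_{\lambda_2}\|_{L^2}^2.
\]
I would then apply Cauchy--Schwarz in $\rho$, splitting the weight as $\rho\cdot\angles{\sigma}_m$, to get
\[
|J(\tau,r)|^2 \le \Big(\int |G(\rho)|^2 \rho^2\, d\rho\Big)\cdot K(\tau,r) \lesssim \|g_{\lambda_2}\|_{L^2}^2\cdot K(\tau,r),
\]
where $K(\tau,r)=\int |F(\sigma)|^2 \angles{\sigma}_m^2\, d\rho$. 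It then remains to prove $\int\!\int_{r\sim\mu} K \,dr\,d\tau\lesssim \mu^2\,\|f_{\lambda_1}\|_{L^2}^2$. Fubini plus the change of variable $\tau\mapsto\sigma$ at fixed $\rho$ (with Jacobian $d\tau = \tfrac{\sigma}{\angles{\sigma}_m}\,d\sigma$) converts this to
\[
\int d\sigma\,|F(\sigma)|^2 \sigma\angles{\sigma}_m \cdot \Bigl|\{(\rho,r):\ \rho\sim\lambda_2,\ r\sim\mu,\ \text{triangle}\}\Bigr|,
\]
so the whole problem boils down to bounding the $(\rho,r)$-area at fixed $\sigma$.

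This is where the decisive input enters: the condition $a\in[-1,1]$ in the proof of Lemma~\ref{LemmaKG} forces the triangle inequalities $|r-\rho|\le \sigma\le r+\rho$. Combined with $r\sim\mu$, they pin $\rho$ to a $\mu$-neighbourhood of $\sigma$ (of length $\lesssim\min(\mu,\lambda_2)$) and the $r$-range to length $\lesssim\min(\mu,\min(\rho,\sigma))$, so the area is $\lesssim\mu^2$. Since $\int |F(\sigma)|^2\sigma\angles{\sigma}_m\,d\sigma\lesssim \|f_{\lambda_1}\|_{L^2}^2$ (using $\angles{\sigma}_m\lesssim\sigma$ on $\sigma\sim\lambda_1$, modulo the easy regime $m\lesssim \lambda_1$), the required estimate follows.

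The main obstacle is the high-high-to-low case $\lambda_1\sim\lambda_2\gg\mu$: without using the triangle constraint, the naive $(\rho,r)$-area would be $\sim\mu\lambda_2$, yielding only the weaker bound $\mu\lambda_2\|f\|^2\|g\|^2$. Extracting the factor $\mu/\lambda_2$ requires going back into the spherical-integration step of the proof of Lemma~\ref{LemmaKG} to read off $|\rho-\sigma|\le r\sim\mu$. A secondary point to check, but not conceptually difficult, is the uniformity in $m\ge 0$: the weights $\sigma\angles{\sigma}_m$, $\rho\angles{\rho}_m$ differ from $\sigma^2,\rho^2$ only when $m\gg\max(\lambda_1,\lambda_2)$, a regime handled by the symmetric Cauchy--Schwarz (swapping the roles of $F$ and $G$) together with $\mu,\lambda_i\ge 1$.
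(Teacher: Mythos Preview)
Your plan is correct and follows the same skeleton as the paper's proof: Plancherel, Lemma~\ref{LemmaKG}, Cauchy--Schwarz in the radial variable, and a change of variable linking $\rho$, $\sigma$, and $\tau$. The one organizational difference worth noting is that the paper observes at the outset
\[
|\tau|=\bigl|\langle\eta\rangle_m-\langle\xi-\eta\rangle_m\bigr|
=\frac{\bigl||\eta|^2-|\xi-\eta|^2\bigr|}{\langle\eta\rangle_m+\langle\xi-\eta\rangle_m}
\le \frac{|\xi|\bigl(|\xi|+2|\eta|\bigr)}{\langle\eta\rangle_m+\langle\xi-\eta\rangle_m}\lesssim |\xi|\sim\mu,
\]
so the $\tau$--integral and the $r$--integral each contribute one factor of $\mu$ directly, and the paper then changes variable $\rho\mapsto\sigma$ at fixed $\tau$. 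This replaces your analysis of the $(\rho,r)$--area via the triangle constraint and makes the high--high--to--low case no harder than any other; indeed your inequality $|\rho-\sigma|\le r$ is exactly what underlies $|\tau|\lesssim\mu$, so the two arguments are equivalent, the paper's just being more direct. One minor point you gloss over: the paper disposes of $\min(\lambda_1,\lambda_2)=1$ separately by H\"older and the radial $L^2_tL^\infty_x$ Strichartz estimate, because the weight $\langle\sigma\rangle_m/\sigma$ appearing in your final integral is unbounded as $\sigma\to 0$ when $m>0$ (your remark about ``the easy regime $m\lesssim\lambda_1$'' has this backwards).
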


Then Lemma \ref{LemmaTransfer} and Lemma \ref{LemmaKG1} imply the
following Corollary.

\begin{corollary}\label{CorrKG2}
  Let $m\ge 0$. Suppose $u_{\lambda_1}$ and $v_{\lambda_2}$ are radial
  functions such that $u_{\lambda_1} , v_{\lambda_2} \in U^2_{m}$.
  Then for any $\mu , \lambda_1, \lambda_2\ge 1 $, we have
  \begin{equation*}
    \label{KGE-U2}
    \norm{ P_{ \mu} ( u_{\lambda_1} \overline{ v}_{\lambda_2}) }_{ L^2_{t, x}(\R^{1+3}) }\lesssim 
    \mu 
    \norm{ u_{ \lambda_1} }_{ U^2_{m} } \norm{ v_{\lambda_2} }_{ U^2_{m} }.
  \end{equation*}

\end{corollary}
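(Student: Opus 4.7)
The corollary should follow almost immediately from Lemma~\ref{LemmaKG1} by the transfer principle of Lemma~\ref{LemmaTransfer}. The only step that needs care is translating between the two forms of the estimate: Lemma~\ref{LemmaKG1} is phrased with the backward-in-time propagator $S_m(-t)$ acting on $g$, whereas the corollary features the complex conjugate $\overline{v}_{\lambda_2}$ of a $U^2_m$-function. The plan is to reconcile these two formulations and then invoke the transfer principle bilinearly.

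First I would record the pointwise identity $\overline{S_m(t)\phi} = S_m(-t)\overline{\phi}$, which is immediate on the Fourier side. Substituting $g=\overline{h}$ in Lemma~\ref{LemmaKG1} then rewrites the estimate as
\[
\bignorm{P_\mu\bigl(S_m(t) f_{\lambda_1}\cdot \overline{S_m(t) h_{\lambda_2}}\bigr)}_{L^2_{t,x}} \lesssim \mu\,\norm{f_{\lambda_1}}_{L^2}\norm{h_{\lambda_2}}_{L^2}
\]
for radial $f_{\lambda_1}, h_{\lambda_2}$, involving only the single propagator $S_m(t)$. I would then apply the transfer principle separately in the two arguments to the bilinear map $T(\phi_1,\phi_2)=P_\mu(\phi_1\overline{\phi_2})$: first with $\phi_2$ fixed and $\phi_1$ varying, upgrading the first slot from a free solution $S_m(t)f_{\lambda_1}$ to an arbitrary $u_{\lambda_1}\in U^2_m$; then symmetrically in the second slot to yield an arbitrary $v_{\lambda_2}\in U^2_m$. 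Radiality is preserved at every step, since Littlewood--Paley projection, complex conjugation, and time translation all commute with spherical averaging.

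The main (minor) obstacle is that the second slot of $T$ is $\C$-antilinear rather than $\C$-linear, so strictly speaking one must verify that the atomic-decomposition argument behind Lemma~\ref{LemmaTransfer} is insensitive to this. It is: one can equivalently define a space $U^2_{m,-}$ via the propagator $S_m(-t)$, observe that $v\mapsto \bar v$ is a tautological isometry $U^2_m\to U^2_{m,-}$ (an atomic decomposition of $v$ conjugates term by term to one of $\bar v$ with the same $L^2$-coefficients), and apply a standard two-propagator version of the transfer principle to get $\norm{P_\mu(u_{\lambda_1}w_{\lambda_2})}_{L^2_{t,x}}\lesssim \mu \norm{u_{\lambda_1}}_{U^2_m}\norm{w_{\lambda_2}}_{U^2_{m,-}}$ before setting $w_{\lambda_2}=\bar v_{\lambda_2}$. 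Everything else is bookkeeping.
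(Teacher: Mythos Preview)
Your proposal is correct and follows exactly the route indicated in the paper, which simply states that the corollary follows from Lemma~\ref{LemmaTransfer} and Lemma~\ref{LemmaKG1} without further detail. Your careful handling of the complex conjugate via the identity $\overline{S_m(t)\phi}=S_m(-t)\overline{\phi}$ and the remark on antilinearity merely spell out what the paper leaves implicit.
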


 \begin{proof}[Proof of Lemma \ref{LemmaKG1}]
   First assume $\min(\lambda_1, \lambda_2)=1$. By symmetry we may
   assume $\lambda_1 \le \lambda_2$.  Then by H\"{o}lder,
   \eqref{LinStr-radial} and the energy inequality, we obtain
   \begin{align*}
     \norm{P_{ \mu} ( u^+_{\lambda_1} v^-_{\lambda_2}) }_{ L^2_{t, x}
     }&\lesssim \norm{ u^+_{\lambda_1} }_{ L^2_t L^\infty_x }\norm{
       v^-_{\lambda_2} }_{ L^\infty_t L^2_x }
     \\
     &\lesssim \norm{ f_{\lambda_1}}_{L^2_{ x}(\R^{3})} \norm{g_{
         \lambda_2}}_{L^2_{ x}(\R^{3})}.
   \end{align*}

   Therefore, we assume from now on that $\lambda_1 , \lambda_2 > 1$.
   If $f$ and $g$ are radial, then their Fourier transforms will also
   be radial.  Let $\widehat{f}(\xi)=\phi(|\xi|)$ and
   $\widehat{g}(\xi)=\psi(|\xi|)$.  Applying the space-time Fourier
   transform, we write
   \begin{align*}
     \widetilde{ P_\mu(u^+_{\lambda_1} v^-_{\lambda_2})}(\tau,
     \xi)&=\beta_{\mu}(|\xi|) \int_{\R^3} \beta_{\lambda_1}(|\eta|)
     \phi(|\eta|) \beta_{\lambda_2}(|\xi-\eta|)\psi(|\xi-\eta|)\delta(
     \tau-\angles{\eta}_m+\angles{\xi-\eta}_m) \, d\eta.
   \end{align*}
   Note that
 $$
 \tau^2=\left[\angles{\eta}_m-
   \angles{\xi-\eta}_m\right]^2=\left[\frac{|\eta|^2-|\xi-\eta|^2}
   {\angles{\eta}_m
     +\angles{\xi-\eta}_m}\right]^2=\left[\frac{|\xi|^2-2|\xi||\eta|
     \cos(\angle{(\xi, \eta))} } {\angles{\eta}_m
     +\angles{\xi-\eta}_m}\right]^2,
 $$
 which implies $$ |\tau|\leq\frac{|\xi|\left[|\xi|+2|\eta|\right]}
 {\angles{\eta}_m +\angles{\xi-\eta}_m} \lesssim \mu .
 $$
 Then using Lemma \ref{LemmaKG}, we obtain
 \begin{align*}
   & \norm{P_{ \mu} (u^+_{\lambda_1} v^-_{\lambda_2}) }_{L^2_{t,
       x}(\R^{1+3})}^2
   \\
   & \qquad \quad \lesssim \int_{\R^{3}} \int_{ |\tau|\lesssim \mu }
   \frac{\beta^2_{\mu}(|\xi|)}{|\xi|^2} \Bigabs{\int_{ \frac{
         \tau+|\xi| }{ 2} } ^{ \infty} [ \beta_{\lambda_1}(\rho) \rho
     \phi(\rho)][ \beta_{\lambda_2}(\varphi(\tau,\rho))
     (\angles{\rho}_m-\tau) \psi( \varphi(\tau,\rho))] \, d\rho}^2 \,
   d\tau d\xi
   \\
   & \qquad \quad \simeq \int_0^\infty \beta^2_{\mu}(r) \int_{
     |\tau|\lesssim \mu } \Bigabs{ \int_{ \frac{ \tau+ r }{ 2} } ^{
       \infty} [ \beta_{\lambda_1}(\rho) \rho \phi(\rho)][
     \beta_{\lambda_2}(\varphi(\tau,\rho)) (\angles{\rho}_m-\tau)
     \psi( \varphi(\tau,\rho))] \, d\rho }^2 \, d\tau dr
   \\
   & \qquad \quad \lesssim \mu \int_{ |\tau|\lesssim \mu}
   \left(\int_\R | \beta_{\lambda_1}(\rho) \rho \phi(\rho)|^2 \,
     d\rho\right) \left( \int_\R | \beta_{\lambda_2}(\varphi(\tau,
     \rho)) (\angles{\rho}_m-\tau) \psi\left(\varphi(\tau,
       \rho)\right)|^2 \, d\rho\right) \, d\tau,
 \end{align*}
 where to get the third line we used the change of variable $\xi=r
 \omega$, for $\omega \in S^2$, and to obtain the fourth inequality we
 used Cauchy-Schwarz with respect to $\rho$ and the fact that
 $\int_0^\infty \beta^2_{\mu}(r) \,dr \lesssim \mu$.
 
 We now use the change of variable $\rho \mapsto \sigma= \varphi(\tau,
 \rho)$, which implies
$$ \angles{\rho}_m-\tau =\angles{\sigma}_m, \quad  
d\rho= \frac{\sigma\angles{\rho}_m}{ \rho \angles{\sigma}_m }
d\sigma,$$ (where $\rho\sim \lambda_1$, since $\lambda_1>1$) to write
\begin{align*}
  \int_\R | \beta_{\lambda_2}(\varphi(\tau, \rho))
  (\angles{\rho}_m-\tau) \psi\left( \varphi(\tau, \rho)\right)|^2 \,
  d\rho &\simeq \frac{\angles{\lambda_1}_m} { \lambda_1 } \int_\R |
  \beta_{\lambda_2}(\sigma) \angles{\sigma}_m \psi\left( \sigma
  \right)|^2 \frac{ \sigma } { \angles{\sigma}_m } \, d\sigma
  \\
  & = \frac{\angles{\lambda_1}_m} { \lambda_1 } \int_\R |
  \beta_{\lambda_2}(\sigma) \sigma \psi\left( \sigma \right)|^2 \frac{
    \angles{\sigma}_m }{\sigma } \, d\sigma
  \\
  & \simeq \frac{\angles{\lambda_1}_m\angles{\lambda_2}_m} { \lambda_1
    \lambda_2} \int_\R | \beta_{\lambda_2}(\sigma) \sigma \psi\left(
    \sigma \right)|^2 \, d\sigma
  \\
  & \lesssim \int_\R | \beta_{\lambda_2}(\sigma) \sigma \psi\left(
    \sigma \right)|^2 \, d\sigma.
\end{align*}
 
We thus obtain
\begin{align*}
  \norm{ P_{ \mu} (u^+_{\lambda_1} v^-_{\lambda_2}) }_{ L^2_{t,
      x}(\R^{1+3}) }^2 &\lesssim \mu^2 \norm{\beta_{\lambda_1}(\rho)
    \rho \phi(\rho)}^2_{L_\rho^2 (\R)} \norm{\beta_{\lambda_2}(\rho)
    \rho \psi(\rho)}^2_{L_\rho^2 (\R)}
  \\
  & = \mu^2 \norm{ f_{\lambda_1}}^2_{L_x^2(\R^{3})} \norm{g_{
      \lambda_2}}^2_{L_x^2(\R^{3})},
\end{align*}
where to get the last equality we used the identities
  $$
  \norm{\beta_{\lambda_1}(\rho) \rho \phi(\rho)}_{L_\rho^2(\R)}=\norm{
    f_{\lambda_1}}_{L_x^2(\R^{3})}, \quad
  \norm{\beta_{\lambda_2}(\rho) \rho
    \psi(\rho)}_{L_\rho^2(\R)}=\norm{g_{ \lambda_2}}_{L_x^2(\R^{3})}.
 $$

\end{proof}

\subsection{Estimates in the non-radial case}
The first part of the next Lemma is well-known.
\begin{lemma}[KG-Strichartz]\label{Lemma-KGStr}
  Let $m> 0$ and $u(t)=S_m(t) f$.  Then
  \begin{equation}
    \label{KGStr}
    \norm{u_\lambda }_{L^2_t L^6_x (\R^{1+3})} \lesssim \lambda^\frac56 \norm{ f_{ \lambda}}_{L^2_x(\R^{3})}.
  \end{equation}

  Moreover, for all $u_\lambda \in U^2_{m} $, we have
  \begin{align}
    \label{KGStr-Transfer}
    \norm{ u_{\lambda} }_{ L^2_{t} L^6_{ x} (\R^{1+3}) }&\lesssim
    \lambda^\frac56 \norm{ u_{\lambda} }_{ U^2_{m} }.
  \end{align}
\end{lemma}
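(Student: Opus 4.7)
The estimate \eqref{KGStr} is a classical frequency-localized Strichartz estimate for the three-dimensional Klein-Gordon propagator at the Schrödinger-admissible pair $(p,q)=(2,6)$ (note $\tfrac{2}{p}+\tfrac{3}{q}=\tfrac{3}{2}$). The derivative loss $\lambda^{5/6}=\lambda^{(n+1)(1/2-1/q)-1/p}$ is well-documented and reflects the degeneracy of the radial curvature of the Klein-Gordon characteristic surface relative to the Schrödinger paraboloid. The cleanest route, given that the lemma is explicitly labelled well-known, is simply to cite the relevant statement from the literature (Brenner; Machihara--Nakamura--Nakanishi--Ozawa; D'Ancona--Fanelli).

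If a self-contained derivation is desired, I would proceed via the standard dispersive/$TT^*$ scheme. First, bound the frequency-localized kernel
$$K_\lambda(t,x)=\int_{\R^3}\beta_\lambda(|\xi|)\,e^{i(x\cdot\xi-t\langle\xi\rangle_m)}\,d\xi$$
by stationary phase, using that on $\supp \beta_\lambda$ the Hessian $\nabla^2\langle\xi\rangle_m$ has two angular eigenvalues of size $\langle\lambda\rangle_m^{-1}$ and one radial eigenvalue of size $m^2\langle\lambda\rangle_m^{-3}$; for $m>0$ this gives a dispersive decay of the form
$$\|S_m(t)P_\lambda f\|_{L^\infty_x}\lesssim \min\!\Bigl(|t|^{-1}\lambda^{2},\ |t|^{-3/2}\lambda^{5/2}m^{-1}\Bigr)\|P_\lambda f\|_{L^1_x}$$
(with the short-time regime $|t|\lesssim\lambda^{-1}$ handled by Bernstein and energy). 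Combining this with the trivial energy identity $\|S_m(t)P_\lambda f\|_{L^2_x}=\|P_\lambda f\|_{L^2_x}$ and invoking the endpoint $TT^*$ machinery of Keel--Tao at the Schrödinger-admissible exponents $(2,6)$ yields \eqref{KGStr} with precisely the $\lambda^{5/6}$ loss produced by the interpolation.

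Once \eqref{KGStr} is in hand, the second bound \eqref{KGStr-Transfer} is immediate from the transfer principle (Lemma \ref{LemmaTransfer}) applied to the bounded linear map $\phi\mapsto P_\lambda\phi$, viewed as a multilinear (in fact linear) operator on $L^2$ to which the estimate for free solutions extends automatically to $U^2_m$-functions.

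The main obstacle in this plan is the stationary-phase step: one must handle the transition between the high-frequency regime $\lambda\gtrsim m$ (where the radial curvature $m^2/\lambda^3$ is small and one must be careful to extract enough decay) and the low-frequency Schrödinger-like regime $\lambda\lesssim m$. In particular the $m>0$ hypothesis is essential — this is precisely why the present non-radial lemma excludes $m=0$, in contrast with the radial Lemma \ref{Lemma-KGStrRadial}, where spherical symmetry provides the decay in place of the radial curvature. The $TT^*$ interpolation and the transfer step are routine.
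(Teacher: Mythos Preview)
Your proposal is correct and matches the paper's approach exactly: the paper simply cites the literature (specifically \cite{DF08}) for \eqref{KGStr} and then invokes the transfer principle (Lemma~\ref{LemmaTransfer}) for \eqref{KGStr-Transfer}. Your additional stationary-phase/$TT^*$ sketch is accurate but goes beyond what the paper provides.
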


\begin{proof}
  For the proof of \eqref{KGStr}, see for example \cite{DF08}. The
  estimate \eqref{KGStr-Transfer} follows from \eqref{KGStr} by
  applying the transfer principle in Lemma \ref{LemmaTransfer}.
\end{proof}

\begin{lemma}[KG-Localized Strichartz]\label{KGr-Str}
  Let $m> 0$.  Consider $u_{ \lambda, \mu}(t)=S_m(t) f_{ \lambda,
    \mu}$, where $\widehat f_{ \lambda, \mu}$ is supported in a cube
  of side length $\mu$ at a distance $\sim \lambda$ from the origin,
  with $1 \lesssim \mu\lesssim \lambda$.  Then for all $\varepsilon
  >0$, we have
  \begin{align}
    \label{KGStr1}
    \norm{ u_{\lambda, \mu} }_{ L^2_{t} L^\infty_{ x} (\R^{1+3}) }
    \lesssim (\mu\lambda)^\frac12 \lambda^\varepsilon \norm{ f_{
        \lambda, \mu} }_{ L^2_{ x}(\R^{3} )}.
  \end{align}
  Moreover, for all $u_{\lambda, \mu} \in U^2_{m} $, we have
  \begin{align}
    \label{KGStr1-Transfer}
    \norm{ u_{\lambda, \mu} }_{ L^2_{t} L^\infty_{ x} (\R^{1+3}) }
    &\lesssim (\mu\lambda)^\frac12 \lambda^\varepsilon \norm{
      u_{\lambda, \mu} }_{ U^2_{m} }.
  \end{align}
\end{lemma}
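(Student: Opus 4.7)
The plan is to reduce the cube-localized Klein-Gordon flow to a slowly dispersing flow by translating the frequency support to the origin, and then apply a wave-type localized Strichartz estimate on the reduction. Let $\xi_0$ denote the center of the cube supporting $\hat f_{\lambda,\mu}$, so $|\xi_0|\sim\lambda$, and set $v_0 = \xi_0/\angles{\xi_0}_m$; since $m>0$, we have $|v_0|<1$. Writing $\hat f_{\lambda,\mu}(\xi) = \hat g(\xi-\xi_0)$ with $\hat g$ supported in the cube $[-\mu,\mu]^3$, Taylor expansion gives
\begin{equation*}
\angles{\xi_0+\eta}_m = \angles{\xi_0}_m + v_0\cdot\eta + r(\eta),\qquad |r(\eta)|\lesssim \mu^2/\lambda \ \text{on}\ \supp \hat g.
\end{equation*}
Setting $\tilde S(t)g := \mathcal F^{-1}(e^{-itr(D)}\hat g)$, one has $|u_{\lambda,\mu}(t,x)| = |(\tilde S(t)g)(x-tv_0)|$, and since modulation and translation are $L^p_x$-isometries, it is enough to prove $\norm{\tilde S(t)g}_{L^2_tL^\infty_x}\lesssim (\mu\lambda)^{1/2}\lambda^\varepsilon\norm{g}_{L^2_x}$.

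The Hessian of $\angles{\cdot}_m$ at $\xi_0$, namely $\angles{\xi_0}_m^{-1}(I - \xi_0\otimes\xi_0/\angles{\xi_0}_m^2)$, has two tangential eigenvalues of size $\sim 1/\lambda$ and one radial eigenvalue of size $\sim m^2/\lambda^3$, so the dispersion of $\tilde S$ is effectively two-dimensional at "speed" $\sim \mu/\lambda$. I would decompose $\hat g$ into dyadic tangential pieces times unit-width radial slabs, apply on each piece a two-dimensional wave-type dispersive bound $\norm{\tilde S(t)\cdot}_{L^1_x\to L^\infty_x}\lesssim \mu^3\angles{t\mu^2/\lambda}^{-1}$ obtained from stationary phase restricted to the two tangential variables, and combine with the trivial energy bound via a standard $TT^*$ argument to obtain $L^2_tL^\infty_x\lesssim (\mu\lambda)^{1/2}$ on each piece. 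An almost-orthogonal summation across the $O(\log\lambda)$ scales of pieces then produces the harmless $\lambda^\varepsilon$ loss.

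The main obstacle is to carry out the orthogonal summation cleanly and uniformly in the mass $m$: the radial direction carries essentially no dispersion, so the slab decomposition must be tuned so as not to degrade the final bound. This is in effect a cube-localized analogue of the classical Klainerman-Tataru wave-type Strichartz estimate, adapted to the massive symbol $\angles{\cdot}_m$, and the endpoint without $\lambda^\varepsilon$ is known to be false already for the wave equation, so the logarithmic loss is natural. Finally, \eqref{KGStr1-Transfer} follows from \eqref{KGStr1} by the transfer principle in Lemma \ref{LemmaTransfer}.
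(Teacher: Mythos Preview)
Your reduction by frequency translation and the identification of the effectively two-dimensional dispersion are correct, and the kernel bound $\|\tilde S(t)\|_{L^1_x\to L^\infty_x}\lesssim \mu^3\angles{t\mu^2/\lambda}^{-1}$ is exactly one of the two inputs the paper uses. The gap is in the next step: a $t^{-1}$ decay rate is \emph{not} enough to close the $TT^*$ argument for $L^2_tL^\infty_x$, since $\int_1^\infty t^{-1}\,dt$ diverges. Consequently your claim that ``a standard $TT^*$ argument'' gives $L^2_tL^\infty_x\lesssim(\mu\lambda)^{1/2}$ on each piece is false as written. Decomposing further into tangential dyadic pieces of size $\nu\le\mu$ and radial slabs does not help: on such a piece the same stationary-phase computation yields a bound $\nu^2\rho\,\angles{t\nu^2/\lambda}^{-1}$, which is still only $t^{-1}$ for large $t$, so every piece carries the same logarithmic divergence. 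The $\lambda^\varepsilon$ loss therefore cannot be attributed to summing $O(\log\lambda)$ pieces that are individually endpoint-bounded.

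What is missing is a second dispersive input that provides faster decay at large times. The paper supplies this via the \emph{full} Klein-Gordon dispersive estimate for the $TT^*$ kernel, $|K_{\lambda,\mu}(t,x)|\lesssim \lambda^4(1+\lambda|t|)^{-3/2}$, and then interpolates it against the cube-localized bound $|K_{\lambda,\mu}(t,x)|\lesssim \mu^3(1+\mu^2|t|/\lambda)^{-1}$ with weight $\theta\in(0,1)$ to get
\[
|K_{\lambda,\mu}(t,x)|\lesssim \mu^{3-3\theta}\lambda^{4\theta}\bigl(1+\mu^2|t|/\lambda\bigr)^{-(2+\theta)/2},
\]
which \emph{is} integrable in $t$ for any $\theta>0$; a direct computation gives $\|K_{\lambda,\mu}\|_{L^1_tL^\infty_x}\lesssim\mu\lambda^{1+4\theta}$, and Young's inequality plus $TT^*$ yield \eqref{KGStr1} with $\varepsilon=2\theta$. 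No spatial-frequency decomposition beyond the original cube is needed. Your closing remark that \eqref{KGStr1-Transfer} follows from \eqref{KGStr1} by Lemma~\ref{LemmaTransfer} is correct.
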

\begin{proof}
  The estimate \eqref{KGStr1-Transfer} follows from \eqref{KGStr1} by
  applying the transfer principle in Lemma \ref{LemmaTransfer}. So we
  only prove \eqref{KGStr1}.

  Without loss of generality, we take $m=1$.  By a standard $TT^*$
  argument, \eqref{KGStr1} is equivalent to the estimate
  \begin{equation}
    \label{TTstar}
    \norm{  K_{\lambda, \mu} \star F }_{ L^2_{t} L^\infty_{ x} (\R^{1+3}) } \lesssim 
    \mu\lambda^{1+2\varepsilon}
    \norm{ F  }_{ L^2_{ t} L_x^1(\R^{1+3} )},
  \end{equation}
  where
  \begin{equation}
    \label{Km}
    K_{\lambda, \mu} (t, x)=\int e^{i(x\cdot \xi -t\angles{\xi })} |\beta_{\lambda, \mu}(\xi)|^2\, d\xi
  \end{equation}
  and $\star$ defines the space-time convolution.  The kernel
  $K_{\lambda, \mu}$ satisfies the following estimates (see
  \cite[Lemma 2.2. Eq. (2.8)]{BH13} and \cite[Eq. (3.2)]{BH14}):
  \begin{align*}
    |K_{\lambda, \mu} (t, x)|&\lesssim \lambda^4 \left( 1+ \lambda |t|
    \right)^{-\frac32},
    \\
    |K_{\lambda, \mu} (t, x)|&\lesssim \mu^3 \left( 1+
      \frac{\mu^2}{\lambda} |t| \right)^{-1}.
  \end{align*}
  We then interpolate these estimates, for $\theta\in (0,1)$, to
  obtain
  \begin{align*}
    |K_{\lambda, \mu} (t, x)|&\lesssim \lambda^{4\theta} \left( 1+
      \lambda |t| \right)^{-\frac32 \theta} \mu^{3-3\theta} \left( 1+
      \frac{\mu^2}{\lambda} |t| \right)^{-(1-\theta)}
    \\
    &\lesssim \mu^{3-3\theta} \lambda^{4\theta} \left( 1+
      \frac{\mu^2}\lambda |t| \right)^{\frac{-\theta-2} 2} ,
  \end{align*}
  where we used $\mu\lesssim \lambda$. Hence
  \begin{align*}
    \norm{K_{\lambda, \mu}}_{L_t^1L^\infty_x} &\lesssim
    \mu^{3-3\theta} \lambda^{4\theta} \int_{0}^\infty \left( 1+
      \frac{\mu^2}\lambda t \right)^{\frac{-\theta-2} 2} \, dt
    \\
    &\lesssim \mu^{3-3\theta} \lambda^{4\theta} \cdot \frac
    \lambda{\mu^2}\lesssim \mu \lambda^{1+4\theta},
  \end{align*}
  which implies (applying also Young's inequality in $t$ and $x$)
$$
\norm{ K_{\lambda, \mu} \star F }_{ L^2_{t} L^\infty_{ x} } \lesssim
\norm{ K_{\lambda, \mu} }_{ L^1_{t} L^\infty_{ x} } \norm{ F }_{
  L^2_{t} L^1_{ x} } \lesssim \mu\lambda^{1+4\theta} \norm{ F }_{
  L^2_{t} L^1_{ x} }.
$$
Choosing $\theta=\frac \varepsilon 2$ yields the desired estimate
\eqref{TTstar}.
\end{proof}

\begin{remark}\label{rmk-cubedecomp}
  Let $C_z=\mu z+ [0, \mu)^3 $, $z\in \Z^3$, be the collection of
  cubes (as in \cite{HL14}) which induce a disjoint covering of
  $\R^3$. Then we have
$$
\sum_{z\in \Z^3} \norm{P_{C_z} u_\lambda}^2_{ L^2_t
  L^\infty_x}\lesssim (\mu\lambda) \lambda^{2\varepsilon} \norm{
  u_{\lambda} }^2_{ U^2_{m} }.
$$
\end{remark}

\section{Proof of the Main Theorem}\label{ProofMainThm}

Similarly to \cite{HHK09, KT07}, we define $X^s $ to be the complete
space of all functions $u:\R\to L^2$ such that $P_\mu u \in
U^2_{m}(\R, L^2) $ for all $\mu\ge 1$, with the norm
$$  \norm{u}_{X^s } =
\left( \sum_{\mu\ge 1} \mu^{2s} \norm{ P_\mu u }_{ U^2_{m } }^2
\right)^{ \frac{1}{2} } < \infty,$$ \text{where } $$\norm{f}_{ U^2_{m
  } } = \norm{S_m(-t) f}_{ U^2}. $$ We also define by $Y^s$ the
corresponding space where $U^2_m$ is replaced by $V^2_m$ with a norm
 $$\norm{f}_{ V^2_{m } } = \norm{S_m(-t) f}_{ V^2}. $$

 On the time interval $I=[0, \infty)$, we define the restricted space
 $X^s_I$ by
$$ X^s_I = \Big\{ u \in C(I, H^s)\ |\; \tilde{u} = \chi_{I}(t)u(t) \in X^s \Big\} $$
with norm $$ \norm{u}_{X^s_I} = \norm{\chi_{I}u}_{X^s}, $$ and define
$Y^s_I$ analogously. Note the embedding
$$
X^s \subset Y^s.
$$

The Duhamel representation of \eqref{BS} is given by
\begin{equation}\label{IntegralEq}
  u(t) = S_m(t) f- i J_m(u)(t)
\end{equation}
where \begin{equation}\label{Duhamel} J_m(u)(t) = \int_0^t S_m(t-t')
  [( V\ast |u|^2) u] (t') dt'.
\end{equation} 
 
The linear part satisfies the following estimate ($m\ge 0$):
\begin{align*}
  \norm{ S_m(t) f}^2_{X^s_I}&=\sum_{\mu\ge 1} \mu^{2s} \norm{ \chi_I
    S_m(t) P_\mu f }_{ U^2_{m } }^2
  \\
  &=\sum_{\mu\ge 1} \mu^{2s} \norm{ \chi_I P_\mu u }_{ U^2}^2 \sim
  \norm{f}^2_{H^s}.
\end{align*}

Theorem \ref{MainThm} will follow by contraction argument from the
above linear estimate and following nonlinear estimates.
 
\begin{proposition}\label{Prop-WpEst}
  For $J_m(u) $ as in \eqref{Duhamel}, we have the following:
  \begin{enumerate}[(a)]
  \item \label{keyprop:a} Let $m\ge 0$ and $s>0$. For all spatially
    radial $u\in X^s_I $,
    \begin{align*}
      \norm{J_m(u)}_{X^s_I} &\lesssim \norm{u}_{X^s_I}^3.
    \end{align*}
  \item \label{keyprop:b} Let $m>0$ and $s>\frac12$. For all $u\in
    X^s_I $,
    \begin{align*}
      \norm{J_m(u)}_{X^s_I} &\lesssim \norm{u}_{X^s_I}^3.
    \end{align*}
  \end{enumerate}
\end{proposition}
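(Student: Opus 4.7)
The strategy is to use the standard $U^2$-$V^2$ duality for the Duhamel integral to convert the $X^s$-bound into a quadrilinear estimate. For each dyadic $\nu\ge 1$,
$$
\|P_\nu J_m(u)\|_{U^2_m} \lesssim \sup_{\|v_\nu\|_{V^2_m} \le 1} \Bigabs{\int_\R\int_{\R^3} P_\nu\bigl[(V\ast|u|^2)u\bigr]\,\overline{v_\nu}\,dx\,dt},
$$
and after taking $\ell^2_\nu$ with weight $\nu^{2s}$ on both sides and transferring the weight to the dual test functions, it suffices to bound the quadrilinear form
$$
\mathcal{T}(u_1,u_2,u_3,w) = \int_\R\int_{\R^3} \bigl(V\ast(u_1\overline{u_2})\bigr)\,u_3\,\overline{w}\,dx\,dt.
$$
I would then decompose $u_j = \sum_{\mu_j} u_{\mu_j}$ and $w = \sum_\nu w_\nu$, and split into subcases according to the ordering of $\{\mu_1,\mu_2,\mu_3,\nu\}$. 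Convolution symmetry of $V\ast$ permits regrouping the pairs $(u_1\overline{u_2}),(u_3\overline{w})$ as $(u_1\overline{w}),(u_3\overline{u_2})$, and in each regime I would choose the pairing that puts the two highest frequencies together—crucial for high-low interactions. Plancherel in the spatial variable yields
$$
|\mathcal{T}(u_{\mu_1},u_{\mu_2},u_{\mu_3},w_\nu)| \lesssim \sum_{\kappa} |\widehat V(\kappa)|\,\|P_\kappa(u_{\mu_1}\overline{u_{\mu_2}})\|_{L^2_{t,x}}\,\|P_\kappa(u_{\mu_3}\overline{w_\nu})\|_{L^2_{t,x}},
$$
where the Yukawa hypothesis $\mu_0>0$ gives $\widehat V(\xi) = 4\pi/(|\xi|^2+\mu_0^2)$, uniformly bounded with $|\widehat V(\kappa)| \lesssim (1+\kappa)^{-2}$.

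For part (a), I would apply the radial bilinear $L^2$ estimate of Corollary \ref{CorrKG2} to both factors, obtaining $\|P_\kappa(u_{\lambda_1}\overline{u_{\lambda_2}})\|_{L^2_{t,x}} \lesssim \kappa\|u_{\lambda_1}\|_{U^2_m}\|u_{\lambda_2}\|_{U^2_m}$. The resulting $\kappa^2$ is essentially absorbed by the Yukawa decay $|\widehat V(\kappa)|\lesssim \kappa^{-2}$, so the $\kappa$-sum contributes at most a logarithmic factor. Together with any log-loss incurred when transferring the $V^2_m$-input $w_\nu$ into $U^q_m$ for some $q>2$ via Proposition \ref{Prop-Vp}(\ref{V-embCont}) or Lemma \ref{Lemma:Interp}, this is absorbed by the $s>0$ margin through Cauchy–Schwarz over the remaining dyadic indices. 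For part (b), in the absence of a bilinear $L^2$ estimate I would use the Klein–Gordon Strichartz estimate $\|u_\mu\|_{L^2_t L^6_x} \lesssim \mu^{5/6}\|u_\mu\|_{U^2_m}$ of Lemma \ref{Lemma-KGStr}, combined with the Sobolev embedding $H^{s}(\R^3) \hookrightarrow L^3(\R^3)$ (valid for $s>\tfrac12$) to place one factor in $L^\infty_t L^3_x$; the Hartree convolution is then controlled by Young's inequality since $V \in L^p(\R^3)$ for all $1\le p<3$. In the more resonant configurations where plain Strichartz is insufficient I would invoke the cube-localized Strichartz estimate of Lemma \ref{KGr-Str} and the cube decomposition of Remark \ref{rmk-cubedecomp}, which recovers a bilinear-type $L^2_{t,x}$ bound modulo an $\varepsilon$-loss.

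The main obstacle is closing the dyadic summation in the critical high–high-to-low interaction. After combining the Yukawa smoothing with the $\kappa$-gains from the bilinear (respectively cube-Strichartz) estimates, the sum in $\kappa$ is borderline logarithmic, and the tight Cauchy–Schwarz argument over the dyadic frequencies must exploit the $s>0$ (resp. $s>\tfrac12$) slack together with the favorable choice of regrouping in $\mathcal T$. In part (b), the $\lambda^\varepsilon$-loss appearing in \eqref{KGStr1-Transfer} precisely determines the threshold $s>\tfrac12$; recovering the lower regularity range of part (a) in the non-radial case would require sharper bilinear $L^2$ estimates for free Klein–Gordon waves, which are not available outside the radial setting.
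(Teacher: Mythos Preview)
Your overall architecture matches the paper's: duality to a quadrilinear form, Littlewood--Paley decomposition, the radial bilinear $L^2$ bound (Corollary~\ref{CorrKG2}) for part~(a), the cube-localized Strichartz estimate (Lemma~\ref{KGr-Str}, Remark~\ref{rmk-cubedecomp}) for part~(b), and the $U^2/U^p$ splitting of Lemma~\ref{Lemma:Interp} to upgrade the $V^2_m$ test function. One point needs correction: the claim that ``convolution symmetry of $V\ast$ permits regrouping the pairs $(u_1\overline{u_2}),(u_3\overline{w})$ as $(u_1\overline{w}),(u_3\overline{u_2})$'' is false---the identity $\int (V\ast f)\,g = \int f\,(V\ast g)$ only swaps the convolution between the \emph{fixed} pairs, it does not let you move a single factor from one pair to the other. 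Fortunately this regrouping is unnecessary: the radial bilinear bound in Corollary~\ref{CorrKG2} is $\|P_\kappa(u_{\lambda_1}\overline{u_{\lambda_2}})\|_{L^2_{t,x}}\lesssim \kappa\,\|u_{\lambda_1}\|_{U^2_m}\|u_{\lambda_2}\|_{U^2_m}$ with a constant independent of $\lambda_1,\lambda_2$, so both bilinear factors contribute $\kappa$ regardless of which frequencies sit where, and the product $\kappa^2$ is absorbed by $\langle\kappa\rangle^{-2}$ exactly as you describe. For the $U^p$ endpoint (your ``log-loss'' companion estimate) the paper simply uses H\"older with two $L^2_tL^\infty_x$ factors and two $L^\infty_tL^2_x$ factors; the needed flexibility to always hit the two smallest $\lambda_j$ comes from choosing which factors go into which H\"older slot, not from any rewriting of the convolution.

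For part~(b) the paper does not use the Sobolev embedding $H^s\hookrightarrow L^3$ at all; it relies throughout on $L^2_tL^6_x$ (Lemma~\ref{Lemma-KGStr}) for the separated-frequency case and the cube decomposition for the comparable-frequency case, obtaining in every configuration a bound $(\lambda_{\min}\lambda_{\mathrm{med}})^{1/2+\varepsilon}$. Your $H^s\hookrightarrow L^3$ route would also close (it produces the same $\lambda^{1/2}$ weight on a single factor), but you would still need the cube decomposition in the high--high$\to$low regime, and there the paper additionally invokes the $U^2/U^p$ interpolation once more to avoid placing the $V^2_m$ input in $L^2_tL^\infty_x$. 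So your sketch is correct in spirit; just drop the regrouping step and be aware that in the non-radial proof the cube decomposition is doing the essential work, not the Sobolev embedding.
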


The proof for this proposition is given in the last section.

 \subsection*{Proof of Theorem \ref{MainThm}}
 We solve the integral equation \eqref{IntegralEq} by contraction
 mapping techniques as follows.  Define the mapping
 \begin{equation}\label{contrmap}
   u(t) =  \Phi (u) (t):= S_m(t)f+ iJ_m(u)(t).
 \end{equation}
 We look for the solution in the set \[ D_\delta = \{ u \in X^s_I:\
 \|u\|_{X^s_I} \leq \delta \}. \] For $u \in D_\delta$ and initial
 data of size $\norm{f}_{H^s}\le \varepsilon\ll \delta$, we have by
 Proposition \ref{Prop-WpEst},
$$
\|\Phi(u)\|_{ X^s_I} \lesssim \varepsilon + \delta^3 \leq \delta
$$
for small enough $\delta$. Moreover, for solutions $u$ and $v$ with
the same data, one can show the difference estimate
\[\begin{aligned} \|\Phi(u)-\Phi(v)\|_{X^s_I} &\lesssim (\|u\|_{X^s_I}+\|v\|_{X^s})^2\|u-v\|_{X^s_I} \\
  &\lesssim \delta^2 \|u-v\|_{X^s_I} \end{aligned}\] whenever $u, \ v
\in D_\delta$.  Hence $\Phi$ is a contraction on $D_\delta$ when
$\delta \ll 1$, which implies the existence of a unique fixed point in
$D_\delta$ solving the integral equation \eqref{contrmap}.

It thus remains to show scattering of solution of \eqref{contrmap} to
a free solution as $t\rightarrow \infty$. By Proposition \ref{Prop-Vp}
and Proposition \ref{Prop-WpEst}, we have for each $\mu$
$$
S_m(-t) P_\mu J_m(u) \in V^2_{-, \text{rc}}
$$
and hence the limit as $t\rightarrow \infty$ exists for each
$\mu$. Combining this with
$$
\sum_{\mu\ge 1} \mu^{2s} \norm{ P_\mu J_m(u) }_{ V^2}^2 \lesssim 1
$$
implies that
$$
\lim_{t\rightarrow \infty} S_m(-t) P_\mu J_m(u)=: \phi^+ \in H^s.
$$
Hence for the solution $u$ we have that
$$
\norm {S_m(t)\phi^+-u(t)}_{ H^s} \rightarrow 0 \ \ \text{as} \
t\rightarrow \infty.
$$

\section{Proof of Proposition \ref{Prop-WpEst} } \label{ProofKeyProp}
We may assume that $u(t)=0$ for $-\infty < t<0 $.
 
By duality (see e.g. \cite{HHK09}),
\begin{equation*}\begin{aligned}\label{duality}
    \norm{P_\lambda J_m(u)}_{U^2_{m}} &= \norm{S_m(-t) P_\lambda
      J_m(u)}_{U^2} =
    \norm{P_\lambda \int_0^t S_m(-t')  [( V\ast |u|^2) u] (t')dt'}_{U^2} \\
    &= \sup_{ \norm {v}_{V^2}=1 } \Big|\iint  [( V\ast |u|^2) u] (t)\ \overline{  S_m(t) P_\lambda v(t)}\ d t dx\Big| \\
    &= \sup_{\norm {v_\lambda}_{V^2_{m}}=1} \Big|\iint [( V\ast |u|^2)
    u] (t) \overline{ v}_\lambda(t)\ dt dx\Big|,
  \end{aligned}
\end{equation*}
Then
\begin{equation}
  \label{Jm-X}
  \begin{split}
    \norm{J_m(u)}^2_{X^s} & =\sum_{\lambda\ge 1} \lambda^{2s} \norm{
      P_{\lambda} J_m(u) }_{ U^2_{m } }^2
    \\
    &=\sum_{ \lambda\ge 1 } \lambda^{2s} \sup_{ \norm{ v_{\lambda}}_{
        V^2_{m }}=1} \Big|\iint [ V\ast(u \overline{u} ) ] u \
    \overline{ v}_{\lambda}\ dt dx\Big| ^2
    \\
    &\lesssim \sum_{ \lambda\ge 1 } \lambda^{2s} \sup_{ \norm{
        v_{\lambda}}_{ V^2_{m }}=1} \left[ \sum_{\lambda_1, \lambda_2,
        \lambda_3\ge 1} \Big|\iint [ V\ast (u_{\lambda_1}
      \overline{u}_{\lambda_2} ) ] u_{\lambda_3} \ \overline{
        v}_{\lambda}\ dt dx\Big|\right]^2
  \end{split}
\end{equation}

To finish the proof of Proposition \ref{Prop-WpEst} \eqref{keyprop:a},
we use the following Lemma which we shall prove in the last section.
\begin{lemma}\label{KeyLemma1}
  Let $\lambda_{\mathrm{min}}$ and $\lambda_{\mathrm{med}}$ denote the
  minimum and median of $(\lambda_1, \lambda_2, \lambda_3)$.
  \begin{enumerate}[(a)]
  \item\label{keylemma:a} Let $m\ge 0$, $\varepsilon>0$. For all
    spatially radial $ u_{j, \lambda_j}\in U^2_{m} $, $v_\lambda \in
    V^2_{m} $ and $\lambda_j, \lambda \ge 1$,
    \begin{align*}
      \Big|\iint [ V\ast (u_{1, \lambda_1} \overline{u}_{2, \lambda_2}
      ) ] u_{3, \lambda_3} \ \overline{ v}_{\lambda}\ dt dx\Big| &
      \lesssim (\lambda_{\mathrm{min}} \lambda_{\mathrm{med}})
      ^\varepsilon \prod_{j=1}^3\norm { u_{j, \lambda_j}}_{U^2_{m}}
      \norm { v_{ \lambda}}_{V^2_{m}},
    \end{align*}
  \item\label{keylemma:b} Let $m>0$ and assume $ u_{j, \lambda_j}\in
    U^2_{m} $ and $v\in V^2_{m} $. Then, for all $\lambda_j, \lambda
    \ge 1$ and $\varepsilon>0$,
    \begin{align*}
      \Big|\iint [ V\ast (u_{1, \lambda_1} \overline{u}_{2, \lambda_2}
      ) ] u_{3, \lambda_3} \ \overline{ v}_{\lambda}\ dt dx\Big| &
      \lesssim (\lambda_{\mathrm{min}} \lambda_{\mathrm{med}})
      ^{\frac12+\varepsilon} \prod_{j=1}^3\norm { u_{j,
          \lambda_j}}_{U^2_{m}} \norm { v_{ \lambda}}_{V^2_{m}}.
    \end{align*}
  \end{enumerate}

\end{lemma}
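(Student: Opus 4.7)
The plan is to reduce the quadrilinear form to an $L^2_{t,x}$ pairing via Plancherel together with the explicit Fourier representation of the Yukawa potential, and then to invoke the bilinear estimates of Section~\ref{LinBi-Est}. Since $\widehat V(\xi) = 4\pi/(\mu_0^2+|\xi|^2) \lesssim \angles{\xi}^{-2}$, after a dyadic decomposition of $u_{1,\lambda_1}\overline{u_{2,\lambda_2}}$ by an ``exchange frequency'' $\mu \ge 1$, Plancherel in $x$ and the frequency support of $V\ast P_\mu$ yield
\[
\iint [V\ast(u_{1,\lambda_1}\overline{u_{2,\lambda_2}})]\,u_{3,\lambda_3}\overline{v_\lambda}\,dx\,dt = \sum_\mu \iint V\ast P_\mu(u_{1,\lambda_1}\overline{u_{2,\lambda_2}})\cdot \widetilde P_\mu(u_{3,\lambda_3}\overline{v_\lambda})\,dx\,dt,
\]
so that Cauchy--Schwarz in $(t,x)$ together with $\|\widehat V\|_{L^\infty(|\xi|\sim\mu)}\lesssim \mu^{-2}$ reduces matters to controlling $\sum_\mu \mu^{-2}\|P_\mu(u_{1,\lambda_1}\overline{u_{2,\lambda_2}})\|_{L^2_{t,x}}\|\widetilde P_\mu(u_{3,\lambda_3}\overline{v_\lambda})\|_{L^2_{t,x}}$. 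Frequency support forces $\mu\lesssim\min(\lambda_1+\lambda_2,\lambda_3+\lambda)$ and also forces the two largest of $(\lambda_1,\lambda_2,\lambda_3,\lambda)$ to be comparable, so the $\mu$-sum runs over at most $\log\lambda_{\max}$ dyadic scales, absorbable into the $\varepsilon$-loss.

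In part~(a) I apply Corollary~\ref{CorrKG2} to each bilinear to get $\|P_\mu(u_{\lambda_i}\overline{u_{\lambda_j}})\|_{L^2_{t,x}}\lesssim \mu\|u_{\lambda_i}\|_{U^2_m}\|u_{\lambda_j}\|_{U^2_m}$, so each $\mu$-summand becomes $\mu^{-2}\cdot\mu\cdot\mu=1$ times the product of norms, and the $\mu$-sum yields the desired $(\lambda_{\min}\lambda_{\mathrm{med}})^\varepsilon$. In part~(b), I cube-decompose each input into cubes of side $\mu$ as in Remark~\ref{rmk-cubedecomp}, retain only the $O(1)$ compatible cube pairs (dictated by the output frequency $|\xi|\sim\mu$), and combine the localized Strichartz bound of Lemma~\ref{KGr-Str} on one factor with $U^2_m\hookrightarrow L^\infty_tL^2_x$ on the other to obtain the non-radial bilinear
\[
\|P_\mu(u_{\lambda_i}\overline{u_{\lambda_j}})\|_{L^2_{t,x}} \lesssim (\mu\min(\lambda_i,\lambda_j))^{1/2}\max(\lambda_i,\lambda_j)^\varepsilon\|u_{\lambda_i}\|_{U^2_m}\|u_{\lambda_j}\|_{U^2_m};
\]
applied to both pairs, with the cube-decomposed factor chosen in each case so that the $1/2$-loss lands on the smaller of the two frequencies, this produces the claimed $(\lambda_{\min}\lambda_{\mathrm{med}})^{1/2+\varepsilon}$.

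Since $v_\lambda\in V^2_m$ rather than $U^2_m$, before using the bilinear estimate on the pair containing $v_\lambda$ I decompose $v_\lambda = w_\lambda + z_\lambda$ via Lemma~\ref{Lemma:Interp} with $w_\lambda\in U^2_m$, $z_\lambda\in U^p_m$ for some $p>2$, $\|w_\lambda\|_{U^2_m}\lesssim M\|v_\lambda\|_{V^2_m}$ and $\|z_\lambda\|_{U^p_m}\lesssim e^{-M}\|v_\lambda\|_{V^2_m}$. The $w$-piece is treated by the bilinear estimate above. For the $z$-piece I instead use H\"older together with $\|V\ast f\|_{L^2_x}\lesssim\|f\|_{L^2_x}$ (from $\widehat V\in L^\infty$) and the Strichartz bounds of Lemmas~\ref{Lemma-KGStrRadial}, \ref{Lemma-KGStr} and~\ref{KGr-Str} transferred to $U^p_m$; any polynomial-in-$\lambda_j$ loss from this cruder estimate is absorbed by choosing $M\sim\log\lambda_{\max}$ in Lemma~\ref{Lemma:Interp}, leaving only an extra logarithm absorbed into the $\varepsilon$-exponent.

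The main technical obstacle lies in part~(b): in ``high--high-to-low'' configurations, where two of the four frequencies are much larger than the other two, a naive choice of which factor to cube-decompose in each bilinear places the $1/2$-loss on the larger rather than the smaller frequency. Handling these regimes requires a case analysis over the relative sizes of $(\lambda_1,\lambda_2,\lambda_3,\lambda)$ and, in certain cases, using the symmetry $\iint V\ast(u_1\overline{u_2})\cdot u_3\overline{v}\,dx\,dt = \iint V\ast(u_3\overline{v})\cdot u_1\overline{u_2}\,dx\,dt$ (from evenness of $V$) to re-pair the factors before estimating, so that the $1/2$-loss always falls on $\lambda_{\min}$ and $\lambda_{\mathrm{med}}$ as claimed.
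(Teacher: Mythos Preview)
Your treatment of part~\ref{keylemma:a} is essentially the paper's argument: apply Corollary~\ref{CorrKG2} to both bilinear factors (yielding a constant per dyadic $\mu$), sum over $\mu$ to pick up a logarithm, and pass from $U^2_m$ to $V^2_m$ on $v_\lambda$ via the decomposition of Lemma~\ref{Lemma:Interp} together with a crude $U^p$-bound coming from the radial $L^2_tL^\infty_x$ Strichartz estimate. This is correct.

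The gap is in part~\ref{keylemma:b}. Your commitment to the symmetric Cauchy--Schwarz split
\[
\sum_\mu \mu^{-2}\,\bigl\|P_\mu(u_{1,\lambda_1}\overline{u_{2,\lambda_2}})\bigr\|_{L^2_{t,x}}\,\bigl\|\widetilde P_\mu(u_{3,\lambda_3}\overline{v_\lambda})\bigr\|_{L^2_{t,x}}
\]
forces a derivative loss on \emph{each} pair. In the regime $\lambda_1\le\lambda_2\ll\lambda_3\sim\lambda$ (which does occur, since the two largest frequencies must be comparable), your bilinear bound applied to the second factor gives at best $(\mu\,\lambda_3)^{1/2}\lambda_3^{\varepsilon}$, because $\min(\lambda_3,\lambda)\sim\lambda_3$. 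Combined with $(\mu\lambda_1)^{1/2}$ from the first factor and $\mu^{-2}$ from $\widehat V$, you obtain $\sum_\mu\mu^{-1}(\lambda_1\lambda_3)^{1/2+\varepsilon}\sim(\lambda_1\lambda_3)^{1/2+\varepsilon}$, whereas the target is $(\lambda_1\lambda_2)^{1/2+\varepsilon}$. Since $\lambda_3$ can be arbitrarily large compared to $\lambda_2$, this does not close. The symmetry you invoke, $\iint V\ast(u_1\overline{u_2})\,u_3\overline{v}=\iint V\ast(u_3\overline{v})\,u_1\overline{u_2}$, only moves the convolution with $V$ to the other pair; it does \emph{not} change which functions are paired, so after Cauchy--Schwarz you face exactly the same two $L^2_{t,x}$ norms.

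The paper avoids this by \emph{not} splitting into $L^2_{t,x}\times L^2_{t,x}$. Instead it estimates the full $L^1_{t,x}$ quadrilinear $K_\mu$ directly via asymmetric H\"older: in the regime above one places both $u_{1,\lambda_1}$ and $u_{2,\lambda_2}$ in $L^2_tL^6_x$ (or, when $\lambda_1\sim\lambda_2\gg\mu$, both cube-localized in $L^2_tL^\infty_x$), and $u_{3,\lambda_3}$, $v_\lambda$ simply in $L^\infty_tL^2_x$, so that no derivative falls on the high-frequency pair. In the mirror regime $\lambda_3\sim\lambda\ll\lambda_1\sim\lambda_2$ the paper obtains two competing bounds (one with $\|v_\lambda\|_{V^2_m}$ losing $(\lambda_1\lambda_2)^{1/2+\varepsilon}$, the other with $\|v_\lambda\|_{U^2_m}$ losing only $(\lambda\lambda_3)^{1/2+\varepsilon}$) and interpolates between them via Lemma~\ref{Lemma:Interp}. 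To repair your argument you must abandon the uniform $L^2\times L^2$ reduction and allow this kind of asymmetric H\"older split in the high--high configurations.
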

  
We now finish the proof of Proposition \ref{Prop-WpEst}\ref{keyprop:a}
by using Lemma \ref{KeyLemma1}\ref{keylemma:a}.  The proof of
Proposition \ref{Prop-WpEst}\ref{keyprop:b} follows from a similar
argument using Lemma \ref{KeyLemma1}\ref{keylemma:b}.

Applying Lemma \ref{KeyLemma1}\ref{keylemma:a} to \eqref{Jm-X}, we
obtain
\begin{align*}
  \norm{J_m(u)}^2_{X^s} &\lesssim \sum_{ \lambda\ge 1 } \lambda^{2s}
  \sup_{ \norm{ v_{\lambda}}_{ V^2_{m }}=1} \left[ \sum_{\lambda_1,
      \lambda_2, \lambda_3 \ge 1} (\lambda_{\text{min}}
    \lambda_{\text{med}}) ^\varepsilon \prod_{j=1}^3\norm { u_{
        \lambda_j}}_{U^2_{m}} \norm { v_{ \lambda}}_{V^2_{m}}\right]^2
  \\
  &:=S_1 + S_2 + S_3,
\end{align*}
where
   $$
   S_1= \sum_{ \substack{ \lambda, \lambda_1, \lambda_2,  \lambda_3\ge 1\\
       \lambda_3\sim \lambda}}, \quad S_2=\sum_{ \substack{ \lambda,
       \lambda_1, \lambda_2, \lambda_3\ge 1
       \\\lambda_3\ll \lambda}}, \quad S_3=\sum_{ \substack{ \lambda, \lambda_1, \lambda_2,  \lambda_3\ge 1\\
       \lambda_3\gg \lambda}}.
 $$

 First Consider $S_{1}$. We have
 \begin{align*}
   S_{1} &\lesssim \sum_{ \lambda\ge 1 } \lambda^{2s} \left[ \sum_{
       \substack{\lambda_1, \lambda_2, \lambda_3\ge 1
         \\
         \lambda_3\sim \lambda}} (\lambda_{1} \lambda_{2})
     ^\varepsilon \norm { u_{ \lambda_1}}_{U^2_{m}} \norm { u_{
         \lambda_2}}_{U^2_{m}} \norm { u_{ \lambda_3}}_{U^2_{m}}
   \right]^2
   \\
   &\lesssim \norm{u_1}^2_{X^s} \norm{u_2}^2_{X^s} \sum_{ \lambda\ge 1
   } \lambda^{2s} \left[ \sum_{\lambda_3\sim \lambda } \norm { u_{
         \lambda_3}}_{U^2_{m}} \right]^2
   \\
   &\lesssim \norm{u}^2_{X^s} \norm{u}^2_{X^s} \norm{u}^2_{X^s} ,
 \end{align*}
 where to obtain the second inequality, we used Cauchy-Schwarz in
 $\lambda_1$ and in $\lambda_2$, and the fact that $ \sum_{
   \lambda_j\ge 1} \lambda_j^{-2(s-\varepsilon)} \lesssim 1$, if we
 assume $s>\varepsilon$.

 Next we consider $S_{2}$.  We have $S_2\le S_{21} + S_{22} + S_{23},$
 where
 $$
 S_{21}=\sum_{ \substack{\lambda_1\ll \lambda_2 \\ \lambda_3\ll
     \lambda} }, \quad S_{22}=\sum_{ \substack{\lambda_1\gg \lambda_2
     \\ \lambda_3\ll \lambda}}, \quad S_{23}=\sum_{
   \substack{\lambda_1\sim \lambda_2 \\ \lambda_3\ll \lambda}}
 $$

 If $\lambda_1 \ll \lambda_2$, then $\lambda_2\sim \lambda$.  Then we
 can apply Cauchy-Schwarz in $\lambda_1$ and in $\lambda_3$ to obtain
 \begin{align*}
   S_{21} &\sim \sum_{ \lambda } \lambda^{2s} \left[ \sum_{
       \substack{\lambda_1\ll\lambda_2\\ \lambda_3\ll \lambda\sim
         \lambda_2 }} (\lambda_{1} \lambda_{3}) ^\varepsilon \norm {
       u_{ \lambda_1}}_{U^2_{m}} \norm { u_{ \lambda_2}}_{U^2_{m}}
     \norm { u_{ \lambda_3}}_{U^2_{m}} \right]^2
   \\
   &\lesssim \norm{u}^2_{X^s} \norm{u}^2_{X^s} \sum_{ \lambda }
   \lambda^{2s} \left[ \sum_{\lambda_2\sim \lambda} \norm { u_{
         \lambda_2}}_{U^2_{m}} \right]^2
   \\
   &\lesssim \norm{u}^2_{X^s} \norm{u}^2_{X^s} \norm{u}^2_{X^s} ,
 \end{align*}
 The estimate for $S_{22}$ is similar.

 If $ \lambda_1 \sim \lambda_2$,then $\lambda \lesssim \lambda_1 \sim
 \lambda_2$. We apply Cauchy-Schwarz in $\lambda_1\sim \lambda_2$ and
 in $\lambda_3$ to obtain
 \begin{align*}
   S_{23} &\lesssim \sum_{ \lambda \ge 1} \lambda^{2s} \left[ \sum_{
       \substack{ \lambda_1\sim \lambda_2\\ \lambda_3\ll
         \lambda\lesssim \lambda_2} } (\lambda_{1} \lambda_{3})
     ^\varepsilon \norm { u_{ \lambda_1}}_{U^2_{m}} \norm { u_{
         \lambda_2}}_{U^2_{m}} \norm { u_{ \lambda_3}}_{U^2_{m}}
   \right]^2
   \\
   &\lesssim \norm{u}^2_{X^s} \norm{u}^2_{X^s} \norm{u}^2_{X^s} \sum_{
     \lambda\ge 1} \lambda^{2s} \left[ \lambda^{\varepsilon-2s}
   \right]^2
   \\
   &\lesssim \norm{u}^2_{X^s} \norm{u}^2_{X^s} \norm{u}^2_{X^s} .
 \end{align*}

 Finally, we consider $S_{3}$. As in the previous case, we have
 $S_3\le S_{31} + S_{32} + S_{33},$ where
 $$
 S_{31}=\sum_{ \substack{\lambda_1\ll \lambda_2 \\ \lambda_3\gg
     \lambda}}, \quad S_{32}=\sum_{ \substack{\lambda_1\gg \lambda_2
     \\ \lambda_3\gg \lambda}}, \quad S_{33}=\sum_{
   \substack{\lambda_1\sim \lambda_2 \\ \lambda_3\gg \lambda}}
 $$

 If $\lambda_1 \ll \lambda_2$, then $\lambda_2\sim \lambda_3$. We
 apply Cauchy-Schwarz first in $\lambda_1$ and then in $\lambda_2\sim
 \lambda_3$ to obtain
 \begin{align*}
   S_{31} &\lesssim \sum_{ \lambda \ge 1} \lambda^{2s} \left[ \sum_{
       \substack{\lambda_1\ll \lambda_2\\ \lambda \ll \lambda_3\sim
         \lambda_2 }} (\lambda_{1} \lambda_{2}) ^\varepsilon \norm {
       u_{ \lambda_1}}_{U^2_{m}} \norm { u_{ \lambda_2}}_{U^2_{m}}
     \norm { u_{ \lambda_3}}_{U^2_{m}} \right]^2
   \\
   &\lesssim \norm{u}^2_{X^s} \norm{u}^2_{X^s} \norm{u}^2_{X^s} \sum_{
     \lambda } \lambda^{2s} \left[ \lambda^{\varepsilon-2s} \right]^2
   \\
   &\lesssim \norm{u}^2_{X^s} \norm{u}^2_{X^s} \norm{u}^2_{X^s} .
 \end{align*}
 The estimate for $S_{32}$ is similar.
 
 If $\lambda_1 \sim \lambda_2$, then $\lambda_3 \lesssim \lambda_2
 $. We apply Cauchy-Schwarz in $\lambda_1\sim \lambda_2$ and in
 $\lambda_3$ to obtain
 \begin{align*}
   S_{33} &\lesssim \sum_{ \lambda } \lambda^{2s}
   \left[ \sum_{ \substack{\lambda_1\sim \lambda_2\\
         \lambda_3\gg\lambda }} (\lambda_{1} \lambda_{3}) ^\varepsilon
     \norm { u_{ \lambda_1}}_{U^2_{m}} \norm { u_{
         \lambda_2}}_{U^2_{m}} \norm { u_{ \lambda_3}}_{U^2_{m}}
   \right]^2
   \\
   &\lesssim \norm{u}^2_{X^s} \norm{u}^2_{X^s} \norm{u}^2_{X^s} \sum_{
     \lambda\ge 1 } \lambda^{2s} \left[\lambda^{\varepsilon-2s}
   \right]^2
   \\
   &\lesssim \norm{u}^2_{X^s} \norm{u}^2_{X^s} \norm{u}^2_{X^s} .
 \end{align*}

 \section{Proof of Lemma \ref{KeyLemma1}}\label{ProofKeyLemma}
   
   \subsection{Proof of Lemma \ref{KeyLemma1}\ref{keyprop:a} }
   First, we claim the following estimates for spatially radial
   functions $ u_{j, \lambda_j}\in U^2_{m} $, $v_\lambda \in V^2_{m} $
   and $\lambda,\lambda_j>0$, $\varepsilon>0$ and $p>2$:
   \begin{align}\label{KES1}
     \Big|\iint [ V\ast (u_{1, \lambda_1} \overline{u}_{2, \lambda_2}
     ) ] u_{3, \lambda_3} \ \overline{ v}_{\lambda}\ dt dx\Big| &
     \lesssim \min(\lambda_1, \lambda_2)^\varepsilon
     \prod_{j=1}^3\norm { u_{j, \lambda_j}}_{U^2_{m}} \norm{
       v_{\lambda}}_{U^2_{m}},
     \\
     \label{KES2}
     \Big|\iint [ V\ast(u_{1, \lambda_1} \overline{u}_{2, \lambda_2} )
     ] u_{3, \lambda_3} \ \overline{ v}_{\lambda}\ dt dx\Big| &
     \lesssim \lambda_{\text{min}} \lambda_{\text{med}}
     \prod_{j=1}^3\norm { u_{j, \lambda_j}}_{U^2_{m}} \norm { v_{
         \lambda}}_{U^p_{m}},
   \end{align}
   where $\varepsilon>0$ and $p>2$.
     
   Let us for the time being assume that the claims \eqref{KES1} and
   \eqref{KES2} hold. Given $M\ge 1$, we use Lemma \ref{Lemma:Interp}
   to decompose $ v_{ \lambda}$ into $ v_{ \lambda}= w_{ \lambda}+
   z_{\lambda}$, where $w_{ \lambda} \in U^2_{m}$ and $z_{ \lambda}
   \in U^p_{m}$, such that
   \begin{align}\label{InterpoEst}
     \frac{\kappa}M \norm { w_{\lambda}}_{U^2_{m}}+ e^M \norm { z_{
         \lambda}}_{U^p_{m}}\lesssim \norm { v_{\lambda}}_{V^2_{m}}
   \end{align}
   for some $\kappa>0$. We can assume that $\widetilde P_\lambda
   w_\lambda=w_\lambda$ and $\widetilde P_\lambda z_\lambda
   =z_\lambda$.

   We then use \eqref{KES1}--\eqref{InterpoEst} to obtain
   \begin{align*}
     \Big|\iint [ V\ast (u_{1, \lambda_1} \overline{u}_{2, \lambda_2}
     ) ] u_{3, \lambda_3} \ \overline{ v}_{\lambda}\ dt dx\Big| & \le
     \Big|\iint [ V\ast (u_{1, \lambda_1} \overline{u}_{1, \lambda_2}
     ) ] u_{1, \lambda_3} \ \overline{ w}_{\lambda}\ dt dx\Big|
     \\
     &\qquad \qquad \qquad + \Big|\iint [ V\ast (u_{1, \lambda_1}
     \overline{u}_{1, \lambda_2} ) ] u_{1, \lambda_3} \ \overline{
       z}_{\lambda}\ dt dx\Big|
     \\
     &\lesssim \min(\lambda_1, \lambda_2)^\varepsilon \prod_{j=1}^3
     \norm { u_{j, \lambda_j}}_{U^2_{m}} \norm {
       w_{\lambda}}_{U^2_{m}}
     \\
     &\qquad \qquad \qquad + \lambda_{\text{min}} \lambda_{\text{med}}
     \prod_{j=1}^3\norm { u_{j, \lambda_j}}_{U^2_{m}} \norm {
       z_{\lambda}}_{U^p_{m}}
     \\
     &\lesssim C_{\lambda, \kappa, M} \prod_{j=1}^3\norm { u_{j,
         \lambda_j}}_{U^2_{m}} \norm { u_{ \lambda}}_{V^2_{m}},
   \end{align*}
   where
   $$
   C_{\lambda, M}=\left\{\frac M\kappa \min(\lambda_1,
     \lambda_2)^\varepsilon + e^{-M} \lambda_{\text{min}}
     \lambda_{\text{med}} \right\}.
   $$
   Now if we choose
 $$
 M = \ln\left( \lambda_{\text{min}} \lambda_{\text{med}} \right),
 $$
 then
   $$
   C_{\lambda, \kappa, M} =\frac 1\kappa \ln\left(\lambda_{\text{min}}
     \lambda_{\text{med}} \right) \min(\lambda_1,
   \lambda_2)^\varepsilon +1 \lesssim (\lambda_{\text{min}}
   \lambda_{\text{med}} )^\varepsilon,
   $$
   and therefore the desired estimate follows.
  
   So it remains to prove \eqref{KES1} and \eqref{KES2}, which we do
   in the following subsections.

 \subsubsection{Proof of \eqref{KES1}}
 In $\R^3$ convolution with $V$ is (up to a multiplicative constant)
 the Fourier-multiplier $\angles{D}^{-2}$ with symbol
 $\angles{\xi}^{-2}$.  Using Littlewood-Paley decomposition and
 Cauchy-Schwarz, we obtain
 \begin{align*}
   \text{LHS of} \ \eqref{KES1} & \lesssim \sum_{\mu\ge 1
   }\angles{\mu}^{-2} \norm{ P_\mu(u_{1, \lambda_1} \overline{u}_{2,
       \lambda_2}) }_{L^2} \norm{ P_\mu(u_{3, \lambda_3}\ \overline{
       v}_{ \lambda}) }_{L^2}
   \\
   &:=S_1 + S_2 + S_3,
 \end{align*}
 where
$$
S_1= \sum_{1\le \mu \ll \lambda_1 }, \quad S_2=\sum_{ 1\le \mu \sim
  \lambda_1 }, \quad S_3= \sum_{ \mu \gg \lambda_1 }, \quad
$$
We now use Corollary \ref{CorrKG2} to obtain
\begin{align*}
  S_1&\lesssim \sum_{ 1\le \mu \ll \lambda_1 }
  \frac{\mu^2}{\angles{\mu}^{2} } \prod_{j=1}^3 \norm{ u_{ j,
      \lambda_j} }_{ U^2_{m} } \norm{ v_{\lambda} }_{ U^2_{m} }
  \\
  &\lesssim \lambda_1^\varepsilon \prod_{j=1}^3\norm { u_{j,
      \lambda_j}}_{U^2_{m}} \norm{ v_{\lambda} }_{ U^2_{m} },
  \\
  S_2 &\lesssim \sum_{1\le \mu \sim \lambda_1 }
  \frac{\mu^2}{\angles{\mu}^{2} } \prod_{j=1}^3 \norm{ u_{ j,
      \lambda_j} }_{ U^2_{m} } \norm{ v_{\lambda} }_{ U^2_{m} }
  \\
  &\lesssim \prod_{j=1}^3\norm { u_{j, \lambda_j}}_{U^2_{m}}\norm{
    v_{\lambda} }_{ U^2_{m} },
\end{align*}
where in the case of $\mu\ll \lambda_1$ we used
      $$ \sum_{ 1\le \mu \ll \lambda_1 } 
      \frac{ \mu }{ \angles{\mu}} \lesssim \ln(\lambda_1) +1\lesssim
      \lambda_1^\varepsilon $$ whereas if $ \mu\sim \lambda_1$ the sum
      is finite, i.e., $ \sum_{ 1\le \mu \sim \lambda_1 } \frac{ \mu
      }{ \angles{\mu}} \lesssim 1$.  Similarly,
      \begin{align*}
        S_3&\lesssim \prod_{j=1}^3\norm { u_{j,
            \lambda_j}}_{U^2_{m}}\norm{ v_{\lambda} }_{ U^2_{m} },
      \end{align*}

      \subsubsection{Proof of \eqref{KES2}}
  
      By symmetry (we do not exploit complex conjugation), we assume
      $\lambda_1\le \lambda_2\le \lambda_3$.  Since $V$ is bounded in
      $L^1(\R^3)$, we have by H\"{o}lder inequality,
      \eqref{LinStr-RadialTransfer} and Proposition \ref{Prop-Up}, we
      have for all $p\ge 2$,
      \begin{align*}
        \text{LHS of} \ \eqref{KES2} &\lesssim \norm { u_{1,
            \lambda_1}}_{L_t^2L_x^\infty}\norm { u_{2,
            \lambda_2}}_{L_t^2L_x^\infty} \norm { u_{3,
            \lambda_3}}_{L_t^\infty L_x^2}\norm {
          v_{\lambda}}_{L_t^\infty L_x^2} .
        \\
        &\lesssim \lambda_1 \lambda_2\norm { u_{1,
            \lambda_1}}_{U^2_{m} }\norm { u_{2, \lambda_2}}_{ U^2_{m}}
        \norm { u_{3, \lambda_3}}_{U^2_{m}}\norm { v_{
            \lambda}}_{U^p_{m}}.
      \end{align*}
  
      \subsection{Proof of Lemma \ref{KeyLemma1}\ref{keylemma:b}}

      Let $m>0$ and assume $ u_{j, \lambda_j}\in U^2_{m} $ and $v\in
      V^2_{m} $.

      Using Littlewood-Paley decomposition, we get
      \begin{align*}
        \Big|\iint [ V\ast (u_{1, \lambda_1} \overline{u}_{2,
          \lambda_2} ) ] u_{3, \lambda_3} \ \overline{ v}_{\lambda}\
        dt dx\Big| \lesssim K:= \sum_{\mu\ge 1 } \angles{\mu}^{-2}
        K_\mu ,
      \end{align*}
      where
 $$
 K_\mu=\norm{ P_\mu(u_{1, \lambda_1} \overline{u}_{2, \lambda_2} )
   P_\mu(u_{3, \lambda_3} \overline{ v}_{\lambda})}_{L^1_{t,x}}.
 $$
 
 By symmetry, we may assume $\lambda_1\le \lambda_2$.
 
 \subsubsection{Case 1: $\lambda_1\le \lambda_2\le \lambda_3$}
 \begin{enumerate}[(a)]
 \item \label{it:case11} Assume $\lambda_1\ll \lambda_2$. Then
   $\mu\sim \lambda_2$.  By Sobolev, H\"{o}lder,
   \eqref{KGStr1-Transfer} and Lemma \ref{LemmaTransfer},
   \begin{align*}
     K_\mu &\lesssim \norm { P_\mu (u_{1, \lambda_1} \overline{u}_{2,
         \lambda_2} ) }_{L_t^1L_x^\infty}
     \norm { u_{3, \lambda_3}}_{L_t^\infty L_x^2}\norm { v_{\lambda}}_{L_t^\infty L_x^2} \\
     & \lesssim \mu \norm { u_{1, \lambda_1} }_{L_t^2L_x^6} \norm {
       u_{2, \lambda_2} }_{L_t^2L_x^6} \norm { u_{3,
         \lambda_3}}_{L_t^\infty L_x^2}\norm {
       v_{\lambda}}_{L_t^\infty L_x^2}
     \\
     &\lesssim \mu (\lambda_1\lambda_2)^\frac 56 \prod_{j=1}^3\norm {
       u_{j, \lambda_j}}_{U^2_{m}} \norm { v_{ \lambda} }_{ V^2_{m} }
   \end{align*}
   Then (using $\mu\sim \lambda_2$)
   \begin{align*}
     K &\lesssim \sum_{\mu\ge 1 } \angles{\mu}^{-2}
     \mu^{2-2\varepsilon}(\lambda_1\lambda_2)^{\frac 13 +\varepsilon}
     \prod_{j=1}^3\norm { u_{j, \lambda_j}}_{U^2_{m}} \norm { v_{
         \lambda} }_{ V^2_{m} }
     \\
     &\lesssim (\lambda_1\lambda_2)^{\frac 13 +\varepsilon}
     \prod_{j=1}^3\norm { u_{j, \lambda_j}}_{U^2_{m}} \norm { v_{
         \lambda} }_{ V^2_{m} }.
   \end{align*}
  
 \item \label{it:case12} Assume $\lambda_1\sim \lambda_2\gg \mu$. In
   this case, we consider (as in \cite{HL14}) the collection of cubes
   $C_z=\mu z+ [0, \mu)^3 $, $z\in \Z^3$, which induce a disjoint
   covering of $\R^3$.  By the triangle inequality, we have
   \begin{align*}
     K_\mu &\lesssim \sum_{z, z'\in \Z^3} \norm{ P_\mu ( P_{C_z} u_{1,
         \lambda_1} P_{C_{z'}} \overline{u}_{2, \lambda_2} )
       P_\mu(u_{3, \lambda_3} \overline{ v}_{\lambda})}_{L^1_{t,x}}
   \end{align*}
   The point of this decomposition is that the term $ P_\mu ( P_{C_z}
   u_{1, \lambda_1} P_{C_{z'}} \overline{u}_{2, \lambda_2} )$ is zero
   for most of $z, z' \in \Z^3$, i.e., for each $z\in \Z^3$, only
   those $z'\in \Z^3$ with $|z-z'| \lesssim 1$ yield a nontrivial
   contribution on the sum.  We use this fact, H\"{o}lder,
   \eqref{KGStr1-Transfer} and Remark \ref{rmk-cubedecomp} to obtain
   \begin{align*}
     K_\mu &\lesssim \sum_{z, z'\in \Z^3} \norm { P_{C_z} u_{1,
         \lambda_1} }_{ L_t^2L_x^\infty} \norm { P_{C_{z'}} u_{2,
         \lambda_2} }_{L_t^2L_x^\infty} \norm { u_{3,
         \lambda_3}}_{L_t^\infty L_x^2}\norm {
       v_{\lambda}}_{L_t^\infty L_x^2}
     \\
     &\lesssim \left(\sum_{z\in \Z^3} \norm { P_{C_z} u_{1, \lambda_1}
       }^2_{ L_t^2L_x^\infty } \right)^\frac12 \left(\sum_{z' \in
         \Z^3} \norm { P_{C_{z'}} u_{2, \lambda_2} }^2_{
         L_t^2L_x^\infty }\right)^\frac12 \norm { u_{3, \lambda_3} }_{
       U^2_{m} } \norm { v_{ \lambda} }_{ V^2_{m} }
     \\
     &\lesssim \mu (\lambda_1\lambda_2)^{\frac 12 +\varepsilon}
     \prod_{j=1}^3\norm { u_{j, \lambda_j}}_{U^2_{m}} \norm { v_{
         \lambda} }_{ V^2_{m} }.
   \end{align*}
   Hence
   \begin{align*}
     K &\lesssim \sum_{\mu\ge 1 } \angles{\mu}^{-2}
     \mu(\lambda_1\lambda_2)^{\frac 12 +\varepsilon}
     \prod_{j=1}^3\norm { u_{j, \lambda_j}}_{U^2_{m}} \norm { v_{
         \lambda} }_{ U^4_{m} }
     \\
     &\lesssim (\lambda_1\lambda_2)^{\frac 12 +\varepsilon}
     \prod_{j=1}^3\norm { u_{j, \lambda_j}}_{U^2_{m}} \norm { v_{
         \lambda} }_{ V^2_{m} }
     \\
     &\lesssim (\lambda_1\lambda_2)^{\frac 12 +\varepsilon}
     \prod_{j=1}^3\norm { u_{j, \lambda_j}}_{U^2_{m}} \norm { v_{
         \lambda} }_{ V^2_{m} }.
   \end{align*}

 \end{enumerate}

 \subsubsection{Case 2: $\lambda_1\le \lambda_3 \le \lambda_2$ or $\lambda_3 \le \lambda_1\le  \lambda_2$} 
 First assume $\lambda_1\le \lambda_3 \le \lambda_2$.  If $\lambda_3
 \ll \lambda_2$ then we follow the same argument as in Case 1
 \eqref{it:case11}. If $\lambda_3 \sim \lambda_2$, then by following
 the same argument in Case 1 \eqref{it:case12}, we can obtain a
 maximum of $(\lambda_1\lambda_3)^{\frac 12 +\varepsilon}$ factor in
 the estimate for $K$.

 Next assume $\lambda_3 \le \lambda_1\le \lambda_2$. The only issue in
 this case is when $\lambda_3 \ll \lambda_1\sim \lambda_2$. If
 $\lambda_3\lesssim \mu$, we can then use the same argument as in Case
 1 \eqref{it:case11} by putting both $u_{1, \lambda_1}$ and $u_{3,
   \lambda_3}$ in $L_t^2L_x^6$ and the others in $L_t^\infty L_x^2$.
 It remains to consider the case $\lambda_3\gg \mu$, which also
 implies $\lambda \sim \lambda_3 \ll \lambda_1\sim \lambda_2$.  Let
 $\varepsilon'> 0$.  Arguing as in Case 1 \eqref{it:case12} (i.e.,
 using the fact $\lambda_1\sim \lambda_2\gg \mu$ ), we have
 \begin{align}\label{K1}
   K &\lesssim (\lambda_1 \lambda_2)^{\frac 12 +\varepsilon'}
   \prod_{j=1}^3\norm { u_{j, \lambda_j}}_{U^2_{m}} \norm { v_{
       \lambda} }_{ V^2_{m} }.
 \end{align}

 Similarly, we apply the decomposition in $\lambda_3 \sim \lambda$ as
 in Case 1 \eqref{it:case12} by putting both $u_{3, \lambda_3}$ and
 $v_{\lambda}$ in $L_t^2L_x^\infty$, whereas $u_{1, \lambda_1}$ and
 $u_{2, \lambda_2}$ in $L_t^\infty L_x^2$. Doing so we obtain
 \begin{align}\label{K2}
   K &\lesssim (\lambda \lambda_3)^{\frac 12 +\varepsilon'}
   \prod_{j=1}^3\norm { u_{j, \lambda_j}}_{U^2_{m}} \norm { v_{
       \lambda} }_{ U^2_{m} }.
 \end{align}

 Now we argue as in the proof of Lemma \ref{KeyLemma1}\ref{keylemma:a}
 by decomposing $ v_{ \lambda}= w_{ \lambda}+ z_{\lambda}$, where $w_{
   \lambda} \in U^2_{m}$ and $z_{ \lambda} \in U^p_{m}$, such that
 \eqref{InterpoEst} is satisfied. Then using \eqref{K1}--\eqref{K2},
 we obtain
 \begin{align*}
   K &\lesssim C_{\kappa, \lambda, M} \prod_{j=1}^3\norm { u_{j,
       \lambda_j}}_{U^2_{m}} \norm { u_{ \lambda}}_{V^2_{m}},
 \end{align*}
 where
   $$
   C_{\lambda, M}=\left[ \frac M\kappa (\lambda \lambda_3)^{\frac12
       +\varepsilon' }+ e^{-M} (\lambda_{1}
     \lambda_{2})^{\frac12+\varepsilon'} \right].
   $$
   Now if we choose
 $$
 M = \left(\frac12+\varepsilon'\right) \ln\left( \lambda_1\lambda_2
 \right),
 $$
 and $\varepsilon'=\varepsilon/2$, then
 \begin{align*}
   C_{\lambda, \kappa, M} &=\frac{1+\varepsilon}{2\kappa} [\ln\left(
     \lambda_1\lambda_2 \right) ]
   (\lambda\lambda_{3})^{\frac{1+\varepsilon}2} +1
   \\
   &\lesssim \lambda_{1}^{\varepsilon} \lambda_{3}^{1+\varepsilon}
 \end{align*}
 which is in fact better than the desired estimate.

 \bibliographystyle{plain} \bibliography{global-hartree}

\end{document}